\documentclass[11pt]{article}
\usepackage{enumerate}
\usepackage{amssymb,a4wide,latexsym,makeidx,epsfig}
\usepackage{amsthm}
\usepackage{dsfont}
\usepackage{amsmath}
\usepackage{lipsum}
\usepackage{enumerate}
\usepackage{mathrsfs}
\usepackage{xcolor}
\usepackage{tikz}
\usepackage{setspace}
\usepackage{geometry}
\usepackage{appendix}
\usepackage{multirow}
\usepackage{subcaption}
\usepackage[colorlinks=true, linkcolor=black, anchorcolor=black, citecolor=black, urlcolor=black, CJKbookmarks=true]{hyperref}
\usepackage{microtype}
\usepackage{colonequals}
\usepackage{enumitem}
\allowdisplaybreaks

\newtheorem{theorem}{Theorem}[section]
\newtheorem{remark}{Remark}[section]

\newtheorem{definition}[theorem]{Definition}
\newtheorem{lemma}[theorem]{Lemma}
\newtheorem{fact}{Fact}[section]

\newtheorem{corollary}[theorem]{Corollary}

\newtheorem{claim}{Claim}[section]

\numberwithin{equation}{section}

\begin{document}
\textwidth 150mm \textheight 225mm

\title{On the rainbow Cameron-Erd\H{o}s problem with respect to generalized Sidon sets of multidimensional grids
}

\author{
Xihe Li\footnote{School of Mathematics and Statistics, Shaanxi Normal University, Xi'an, Shaanxi 710119, China.}~\footnote{Corresponding author.}~~~~~~
Runshan Wang\footnotemark[1]~~~~~~
}
\date{}
\maketitle
\newcommand\blfootnote[1]{%
\begingroup
\renewcommand\thefootnote{}\footnote{#1}%
\addtocounter{footnote}{-1}%
\endgroup
}
\blfootnote{E-mail addresses: xiheli@snnu.edu.cn, rswangmath@163.com.}
\begin{center}
\begin{minipage}{120mm}
\begin{center}
{\small {\bf Abstract}}
\end{center}
{\small For positive integers $n$, $d$, $k$ and $h$, let $[n]^d$ be the $d$-dimensional grid of order $n$, and we refer to the equation $\sum_{i=1}^{h}x_{1,i}=\sum_{i=1}^{h}x_{2,i}=\cdots =\sum_{i=1}^{h}x_{k,i}$ as the {\it $B_{k,h}$-equation}, where $x_{\ell,i}$ ($\ell \in [k]$, $i\in [h]$) are $kh$ points in $[n]^d$.
In this paper, we study the rainbow Cameron-Erd\H{o}s problem with respect to the $B_{k,h}$-equation.
We obtain the asymptotic number of $r$-colorings of $[n]^d$ without rainbow solutions to the $B_{k,h}$-equation, and we show that the typical colorings with this property are $(kh-1)$-colorings.
We also prove that among all subsets of $[n]^d$, $[n]^d$ is the unique subset admitting the maximum number of $r$-colorings without rainbow solutions to the $B_{k,h}$-equation.
The case $d=1$ and $k=h=2$ of our result confirms a conjecture on Sidon sets by Lin, Wang and Zhou~[{\it European J. Combin.}, 2022];
the case $d=1$, $k=2$ and $h\geq 2$ of our result partly solves a problem concerning linear equations proposed by Cheng, Jing, Li, Wang and Zhou~[{\it J. Combin. Theory Ser. A}, 2023];
the case $d\geq 2$ and $k=h=2$ corresponds to colorings without rainbow (possibly degenerate) parallelograms, and this geometric perspective might be of independent interest.
Our proof combines the hypergraph container method with a stability analysis and a deviation gain argument.
\hspace{2em}

\vskip 0.1in \noindent {\bf AMS Subject Classification (2020)}: \ 05A16, 11B30, 11B75
\vskip 0.1in \noindent {\bf Keywords}: \ Counting and typical structures, rainbow Cameron-Erd\H{o}s problem, Sidon sets, hypergraph container method, colorings
}
\end{minipage}
\end{center}

\section{Introduction}
\label{sec:introduction}

One of the fundamental topics in extremal combinatorics is counting discrete structures that possess certain properties and analyzing their typical structures.
A major problem in this field, dating back to 1990 and initiated by Cameron and Erd\H{o}s \cite{CaEr}, is to determine the number of subsets of positive integers that satisfy a given constraint.
For the number of sum-free subsets, we refer the reader to \cite{ABMS,BLST18,Gho,Gre,LiSh,Sap};
for results on $B_h$-sets, Sidon sets or multiplicative Sidon sets, we refer to~\cite{DKLRS16,DKLRS18,KLRS,LiPa,SaTh16};
and for the number of subsets without $k$-term arithmetic progressions, we refer to~\cite{BaLS,BHMNW} .

In the setting of colored discrete structures, Erd\H{o}s and Rothschild \cite{Erd74} asked which graphs on $n$ vertices admit the maximum number of $r$-edge-colorings without a monochromatic subgraph $H$.
The problem of Erd\H{o}s and Rothschild was also generalized to other structures.
For example, Hoppen, Kohayakawa and Lefmann~\cite{HoKL} obtained an Erd\H{o}s-Ko-Rado type result for monochromatic intersecting systems; Liu, Sharifzadeh and Staden~\cite{LiSS} and H\`{a}n and Jim\'{e}nez~\cite{HaJi} determined the maximum number of monochromatic sum-free colorings of integers and of finite abelian groups, respectively.
In addition to monochromatic substructures, this problem was also extended to colorings with other forbidden color patterns, such as rainbow substructures.
For instance, the number of rainbow triangle-free $r$-edge-colorings of complete graphs (also known as Gallai colorings) was determined in \cite{BaLi,BaBH};
Li, Broersma and Wang~\cite{LiBW21} obtained the asymptotic number of $r$-colorings of $[n]$ or $\mathbb{Z}_n$ without rainbow 3-term arithmetic progressions, which was extended to $k$-term arithmetic progressions by Lin, Wang and Zhou~\cite{LiWZ};
Cheng, Jing, Li, Wang and Zhou~\cite{CJLWZ} studied the number of rainbow sum-free colorings of integers and their typical structures.

In this paper, we extend the above problems to study rainbow solutions to generalized Sidon-equations in multidimensional grids under coordinatewise addition.
For positive integers $n$ and $d$, let $[n]\colonequals \{1, 2, \ldots, n\}$, and let $[n]^d$ be the {\it $d$-dimensional grid of order $n$}.
Note that $[n]^d$ has $n^d$ points, and each point can be viewed as a $d$-dimensional vector in which each coordinate is an integer within the set $[n]$.
For integers $k\geq 2$ and $h\geq 2$, we refer to the equation $$\sum_{i=1}^{h}x_{1,i}=\sum_{i=1}^{h}x_{2,i}=\cdots =\sum_{i=1}^{h}x_{k,i}$$ as the {\it $B_{k,h}$-equation},
where $x_{\ell,i}$ ($\ell \in [k]$, $i\in [h]$) are $kh$ points in $[n]^d$.
In particular, when $d=1$ and $k=h=2$, the equation is usually referred to as the {\it Sidon equation}, and a set $A\subseteq [n]$ is called a {\it Sidon set} if all sums $x_1+x_2$ with $x_1, x_2\in A$ are distinct (i.e., $A$ contains no nontrivial solutions to the Sidon equation);
the case that $d=1$, $k=2$ and $h\geq 2$ corresponds to the {\it $B_h$-set}, which is defined as a set $A\subseteq [n]$ such that all sums $x_1+\cdots +x_h$ with $x_1, \ldots, x_h\in A$ are distinct.

For an integer $r\geq 1$ and a subset $A\subseteq [n]^d$, let $c\colon\, A \to [r]$ be an {\it $r$-coloring} of $A$.
Given an $r$-coloring $c$ of $A$, a solution $x_{1,1}, \ldots, x_{1,h}, x_{2,1}, \ldots, x_{2,h}, \ldots, x_{k,1}, \ldots, x_{k,h}$ to the $B_{k,h}$-equation in $A$ is called {\it rainbow}
if the colors $c(x_{\ell,i})$ ($\ell \in [k]$, $i\in [h]$) are pairwise distinct.
For any $A\subseteq [n]^d$ and $r\geq kh$, we use $g_r(A)$ to denote the number of $r$-colorings of $A$ without rainbow solutions to the $B_{k,h}$-equation.
By considering colorings of $A$ with exactly $kh-1$ colors (note that a $(kh-1)$-coloring certainly contains no rainbow solutions to the $B_{k,h}$-equation), a lower bound on $g_r(A)$ is
\begin{align}\label{eq:lower}
g_r(A)\geq &~\binom{r}{kh-1}\sum_{i=0}^{kh-2}(-1)^i {kh-1\choose i} (kh-1-i)^{|A|} \nonumber\\
= &~(1-o(1))\binom{r}{kh-1}(kh-1)^{|A|}.
\end{align}
Our main result shows that this lower bound is asymptotically sharp when $A$ is dense.

\begin{theorem}\label{th:Bkh}
For integers $d, k, h, r, n$ with $d\geq 1$, $k\geq 2$, $h\geq 2$, $r\geq kh$ and $n$ sufficiently large,
if $A\subseteq [n]^d$ is a subset with $|A|\geq n^d-\frac{n^d}{\log n}$, then
$$g_r(A)= (1+o(1))\binom{r}{kh-1}(kh-1)^{|A|}.$$
Moreover, all but a $o(1)$ proportion of $r$-colorings of $A$ without rainbow solutions to the $B_{k,h}$-equation are $(kh-1)$-colorings.
Furthermore, among all subsets of $[n]^d$, $[n]^d$ is the unique subset admitting the maximum number of $r$-colorings without rainbow solutions to the $B_{k,h}$-equation.
\end{theorem}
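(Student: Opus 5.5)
We will prove the theorem with the hypergraph container method, followed by a stability analysis and a deviation-gain argument. Throughout, let $N=|A|$ and $K=kh$.

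\textbf{Supersaturation.} The key counting fact is that any set $B\subseteq[n]^d$ of size $\varepsilon n^d$ contains $\Omega_\varepsilon(n^{d(K-1)})$ solutions to the $B_{k,h}$-equation with pairwise distinct entries. We obtain these by counting, for each vector $s$, the representations $r_B(s)$ of $s$ as an ordered sum of $h$ elements of $B$. Cauchy--Schwarz gives $\sum_s r_B(s)^k \ge (|B|^h)^k/(hn)^{d(k-1)}$, since only $O(n^d)$ sums $s$ occur. This is of order $n^{d(kh-k+1)}$, which is smaller than $n^{d(K-1)}$. So rather than using a pure sumset count, we use the full degrees of freedom. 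A rainbow solution needs $K$ distinct colors, one on each variable, and the constraint system imposes $d(k-1)$ independent linear conditions. Hence the relevant hypergraph $\mathcal H$ on $[n]^d$ has $K$-edges numbering $\Theta(n^{d(K-k+1)})$. Balanced supersaturation then follows from Varnavides-type averaging over sub-grids together with the above counting bound.

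\textbf{Containers and the reduction to a few colors.} We apply the container lemma to the $K$-uniform hypergraph whose edges are solutions with distinct entries. This produces a family $\mathcal C$ of $2^{o(N)}$ containers. For every $r$-coloring $c$ of $A$ without rainbow solutions, there is a tuple of containers $(C_1,\dots,C_r)$ with $c^{-1}(j)\subseteq C_j$. The container lemma yields this in its colored, "rainbow" form: each choice of $K$ color classes, one per variable, spans few rainbow edges. For each tuple of containers, the number of colorings is at most $\prod_{v\in A}|\{j: v\in C_j\}|$. The containers do not support $K$-tuples of "robust" classes, that is, $K$ classes each of size $\ge\varepsilon n^d$, with $\delta n^{d(K-k+1)}$ rainbow solutions across them. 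Combining this with supersaturation, for every container tuple either the product is at most $(K-1-\eta)^N$, or all but at most $\varepsilon n^d$ points lie in at most $K-1$ of the $C_j$'s. This is the stability step. It already gives $g_r(A)\le (K-1+o(1))^N$, together with the structure that, for typical colorings, only $K-1$ colors are used on all but $o(n^d)$ points.

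\textbf{Deviation gain.} This is the main obstacle. We must show that colorings in which exactly $K-1$ "major" colors are used, plus a set $D$ of $t\ge1$ exceptional points receiving one of the other colors, number $o(\binom{r}{K-1}(K-1)^N)$ in total. For a deviant point $x$ with a new color, we count partial solutions. Since the major classes are large, say each of size $\ge n^d/(2K)$ after stability, supersaturation shows that $x$ lies in $\Omega(n^{d(K-k)})$ solutions whose other $K-1$ entries can be chosen from the major classes with distinct colors. Each such solution forbids one specific joint color assignment. A greedy or random selection of disjoint such $(K-1)$-sets then shows that, given $D$, the number of valid colorings of $A\setminus D$ is at most $(K-1)^{N-t}e^{-\Omega(t n^{d}/\mathrm{poly})}$. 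Summing over $D$ and the colors used on $D$ gives a factor of at most $\sum_t\binom{N}{t}r^t e^{-ct n^{d}/\dots}=o(1)$. Points near the boundary of the grid need care, and the hypothesis $|A|\ge n^d-n^d/\log n$ ensures that every point of $A$ still has linearly many representations.

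\textbf{Uniqueness of the maximizer.} For sparse $A$ with $|A|\le n^d-n^d/\log n$, we use the trivial bound $g_r(A)\le r^{|A|}$ only when $|A|$ is tiny. Otherwise, the container bound gives $g_r(A)\le (K-1+o(1))^{|A|}\cdot 2^{o(n^d)}$. This is smaller than $(K-1)^{n^d}\le g_r([n]^d)$ as long as the $o(n^d)$ term beats $(n^d/\log n)\log(K-1)$, which requires quantitative control of the container count. For dense $A\ne[n]^d$, the asymptotic formula gives $g_r(A)\approx (K-1)^{|A|}\binom{r}{K-1}<g_r([n]^d)$ because $|A|<n^d$.
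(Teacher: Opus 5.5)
Your overall plan is the paper's: containers for the rainbow hypergraph on $A\times[r]$, stability, a deviation-gain count through a deviant point, and a sparse/medium/dense split. There are two genuine gaps, plus some smaller errors.

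\textbf{Stability: the single palette is not proved.} Few rainbow solutions plus single-set supersaturation give you only that almost every point has a palette of at most $K-1$ colors. You then assert that typical colorings use one \emph{fixed} set of $K-1$ colors on all but $o(n^d)$ points, and this does not follow. A template giving half the points palette $C$ and half palette $C'\neq C$ (both of size $K-1$) supports $(K-1)^N$ colorings. Unless you exclude such containers, you only get $g_r(A)\le |\mathcal C|(K-1)^N=2^{o(N)}(K-1)^N$, and your deviation step has no fixed major set to deviate from. Excluding them needs a \emph{mixed} supersaturation count between the two palette classes, and the configuration matters. The paper (Lemmas~\ref{le:fdkhj} and~\ref{le:sta-CP}) takes $h-1$ entries of every group from $A_{P,C_P}$ and exactly one entry of every group from $A_{P,C}$. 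Then all $k$ group sums come from one representation function, and Jensen gives at least $c|A_{P,C_P}|^{k(h-1)}|A_{P,C}|^{k}/n^{d(k-1)}$ such solutions, each rainbow-colorable from the palettes. A lopsided configuration can have no solutions at all. If one class is the points with even first coordinate and the other those with odd first coordinate, a solution with a single entry from the second class is impossible by parity. Likewise, size alone does not force rainbow solutions across $K$ color classes. Coloring the even numbers of $[n]$ with three colors and the odd numbers with a fourth gives no rainbow solution to $x_1+x_2=y_1+y_2$.

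\textbf{The medium range is not proved.} As you note yourself, the range $n^d/\log n\le |A|\le n^d-n^d/\log n$ is left open. The only slack there is $(K-1)^{n^d/\log n}$, so every error factor must be $2^{o(n^d/\log n)}$. For the container count you must fix $\tau$ and verify the co-degree condition of Theorem~\ref{th:HCT}. The paper takes $\tau=n^{-dk(h-1)/(kh-1)}(\log n)^8$, bounds $\Delta_j(\mathcal H)$ by fixing $j$ points and counting completions, and gets $|\mathcal C|\le 2^{|A|n^{-dk(h-1)/(kh-1)}(\log n)^{10}}$. This direct co-degree check, not Varnavides averaging, is what the container lemma needs. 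For stability you need exceptional sets of size $O(|A|/(\log n)^2)$ rather than $\varepsilon n^d$, since a factor $r^{\varepsilon n^d}$ swamps the slack. That in turn needs supersaturation at densities $(\log n)^{-O(1)}$ inside an $A$ as small as $n^d/\log n$. The H\"older bound $c|B|^{kh}/n^{d(k-1)}$, valid for $|B|\ge n^{d(k-1)/k}\log n$, provides this.

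\textbf{Smaller points.}
\begin{enumerate}
\item Your first exponent $n^{d(K-1)}$ is wrong for $k\ge 3$; there are only $O(n^{d(K-k+1)})$ solutions in all.
\item In the deviation step, solutions through $x$ whose other entries lie in distinct major color classes would themselves be rainbow, so for a fixed valid coloring there are none. The argument must be a count over colorings with a fixed deviation set $D$. Fix $v\in D$ and take $\Omega(n^d)$ disjoint $(K-1)$-sets in $A\setminus D$ completing solutions with $v$; these are chosen independently of the coloring. Each such set loses the $(K-1)!$ bijective assignments of $C$, giving a factor $2^{-\Omega(n^d)}$. Since $|D|\le N/(\log n)^2$, this already beats $\binom{N}{|D|}r^{|D|}$, so you need neither balanced classes nor a gain proportional to $|D|$.
\item The lower bound $\Omega(n^{d(K-k)})$ on solutions through \emph{every} point, including corners, is asserted rather than proved. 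The paper proves it by an explicit box construction at corners (Lemma~\ref{le:fAv}) and reflection into a half-size subgrid (Corollary~\ref{cor:fAv}).
\end{enumerate}
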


For colorings of the grid $[n]^d$, we can deduce the following consequence immediately.

\noindent\begin{corollary}\label{co:Bh}
For integers $d, k, h, r, n$ with $d\geq 1$, $k\geq 2$, $h\geq 2$, $r\geq kh$ and $n$ sufficiently large,
we have
$$g_r([n]^d)= (1+o(1))\binom{r}{kh-1}(kh-1)^{n^d}.$$
Moreover, all but a $o(1)$ proportion of $r$-colorings of $[n]^d$ without rainbow solutions to the $B_{k,h}$-equation are $(kh-1)$-colorings.
\end{corollary}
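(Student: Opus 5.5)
Corollary~\ref{co:Bh} follows directly from Theorem~\ref{th:Bkh} by taking $A=[n]^d$. The plan is simply to check that $[n]^d$ satisfies the density hypothesis and then read off both conclusions.

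First, the hypothesis holds: $|[n]^d| = n^d \geq n^d - \frac{n^d}{\log n}$, since $\frac{n^d}{\log n} > 0$ for $n\geq 2$. The remaining parameters $d\geq 1$, $k\geq 2$, $h\geq 2$, $r\geq kh$ and $n$ sufficiently large are exactly those of Theorem~\ref{th:Bkh}. Hence the first conclusion of Theorem~\ref{th:Bkh} gives
$$g_r([n]^d)=(1+o(1))\binom{r}{kh-1}(kh-1)^{|[n]^d|}=(1+o(1))\binom{r}{kh-1}(kh-1)^{n^d}.$$

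Second, the structural statement is verbatim the second conclusion of Theorem~\ref{th:Bkh} with $A=[n]^d$: all but an $o(1)$ proportion of rainbow-solution-free $r$-colorings of $[n]^d$ are $(kh-1)$-colorings.

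There is no real obstacle here. The only point to watch is that the $o(1)$ terms depend only on $d,k,h,r$ as $n\to\infty$, as in Theorem~\ref{th:Bkh}, and that this dependence is inherited unchanged. The uniqueness clause of the theorem is not needed for the corollary.
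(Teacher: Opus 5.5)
Your proposal is correct and matches the paper, which deduces Corollary~\ref{co:Bh} immediately from Theorem~\ref{th:Bkh} by taking $A=[n]^d$. For this choice the density hypothesis $|A|\geq n^d-\frac{n^d}{\log n}$ holds trivially.
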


We now provide three remarks regarding our result.

\begin{remark}\label{re:Sidon} ~
{\rm
\begin{itemize}
\item[(1)] In 2022, Lin, Wang and Zhou~\cite{LiWZ} conjectured that for sufficiently large $n$, $[n]$ is the unique subset admitting the maximum number of $r$-colorings without rainbow solutions to $x_1+x_2=y_1+y_2$ among all subsets of $[n]$, and almost all such $r$-colorings of $[n]$ use at most three colors (see \cite[Conjecture~6.1]{LiWZ}). The special case $d=1$ and $k=h=2$ of our result confirms this conjecture.

\item[(2)] In 2023, Cheng, Jing, Li, Wang and Zhou~\cite{CJLWZ} proposed the following problem: for pairs $(h_1, h_2)$ of integers, study the number of $r$-colorings of $[n]$ without rainbow solutions to the equation $\sum_{i=1}^{h_1}x_i=\sum_{i=1}^{h_2}y_i$ (see \cite[Section~6]{CJLWZ}). The special case $d=1$, $k=2$ and $h\geq 2$ of our result solves this problem for the case $h_1=h_2=h$. For the case $h_1\neq h_2$, we shall provide more remarks in Section~\ref{sec:conclu}.

\item[(3)] From a geometric perspective, when $d\geq 2$, $k=2^{t-1}$ and $h=2$ for some $t \geq 2$, the equation $x_{1,1}+x_{1,2}=\cdots=x_{2^{t-1},1}+x_{2^{t-1},2}$ describes a (possibly degenerate) $t$-dimensional parallelotope in the grid $[n]^d$. In particular, in the case $d\geq 2$ and $k=h=2$, the equation $x_1+x_2=y_1+y_2$ describes a (possibly degenerate) parallelogram. Hence, we can also view this case of our result as a counting and typical structure result for colorings without rainbow parallelograms. For nondegenerate (i.e., the four points are not collinear) parallelograms, the result is still true, and we will discuss this in Section~\ref{sec:conclu}.
\end{itemize}
}
\end{remark}

Let $\mathbb{Z}_n$ be the cyclic group of order $n$ formed by the set $\{0, 1, \ldots, n-1\}$ with the binary operation addition modulo $n$.
For any point $v\in [n]^d$ or $v\in \mathbb{Z}_n^d$, let $v(1), \ldots, v(d)$ be its coordinates, i.e., $v=(v(1), \ldots, v(d))$.
A solution to the $B_{k,h}$-equation in $\mathbb{Z}_n^d$ consists of $kh$ points $x_{\ell,i}$ ($\ell \in [k]$, $i\in [h]$) in $\mathbb{Z}_n^d$ with
$\sum_{i=1}^{h}x_{1,i}(j)\equiv\sum_{i=1}^{h}x_{2,i}(j)\equiv\cdots\equiv\sum_{i=1}^{h}x_{k,i}(j) \pmod{n}$
for all $j\in [d]$.
For any $A\subseteq \mathbb{Z}_n^d$ and $r\geq kh$, we use $g_r(A; \mathbb{Z}_n^d)$ to denote the number of $r$-colorings of $A$ without rainbow solutions to the $B_{k,h}$-equation.
For any $A\subseteq [n]^d$, if we also view $A$ as a subset of $\mathbb{Z}_n^d$, then $g_r(A; \mathbb{Z}_n^d)\leq g_r(A)$ since a solution to the $B_{k,h}$-equation in $[n]^d$ is also a solution in $\mathbb{Z}_n^d$.
Moreover, the lower bound $g_r(A; \mathbb{Z}_n^d)\geq (1-o(1))\binom{r}{kh-1}(kh-1)^{|A|}$ also holds.
Hence, we have the following corollary (the furthermore part follows from $g_r(A; \mathbb{Z}_n^d)\leq g_r(A)$ and the proof of Theorem~\ref{th:Bkh}).

\begin{corollary}\label{co:Zn}
For integers $d, k, h, r, n$ with $d\geq 1$, $k\geq 2$, $h\geq 2$, $r\geq kh$ and $n$ sufficiently large,
if $A\subseteq \mathbb{Z}_n^d$ is a subset with $|A|\geq n^d-\frac{n^d}{\log n}$, then
$$g_r(A; \mathbb{Z}_n^d)= (1+o(1))\binom{r}{kh-1}(kh-1)^{|A|}.$$
Moreover, all but a $o(1)$ proportion of $r$-colorings of $A$ without rainbow solutions to the $B_{k,h}$-equation are $(kh-1)$-colorings.
Furthermore, among all subsets of $\mathbb{Z}_n^d$, $\mathbb{Z}_n^d$ is the unique subset admitting the maximum number of $r$-colorings without rainbow solutions to the $B_{k,h}$-equation.
\end{corollary}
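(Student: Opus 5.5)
We derive Corollary~\ref{co:Zn} from Theorem~\ref{th:Bkh} by comparing $\mathbb{Z}_n^d$ with $[n]^d$. First we fix an identification: send $v\in\mathbb{Z}_n^d$, with coordinates in $\{0,\ldots,n-1\}$, to the point of $[n]^d$ obtained by replacing each coordinate $0$ by $n$. This is a bijection, and it preserves residues mod $n$. Take any solution to the $B_{k,h}$-equation in $[n]^d$. Its sums agree exactly in every coordinate, so they agree mod $n$, and the same tuple is a solution in $\mathbb{Z}_n^d$. Hence every rainbow solution of a coloring in $[n]^d$ is also a rainbow solution of the corresponding coloring in $\mathbb{Z}_n^d$. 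Therefore, for every $A\subseteq\mathbb{Z}_n^d$, viewed also as a subset of $[n]^d$,
\[
\{\text{$r$-colorings of $A$ rainbow-free in }\mathbb{Z}_n^d\}\subseteq\{\text{$r$-colorings of $A$ rainbow-free in }[n]^d\},
\]
so $g_r(A;\mathbb{Z}_n^d)\le g_r(A)$.

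For the counting statement, take $|A|\ge n^d-n^d/\log n$. The upper bound $(1+o(1))\binom{r}{kh-1}(kh-1)^{|A|}$ follows from the inclusion above and Theorem~\ref{th:Bkh}. For the lower bound, note that a coloring using at most $kh-1$ colors cannot contain a rainbow solution, since a rainbow solution needs $kh$ distinct colors. So the inclusion–exclusion count in \eqref{eq:lower} applies verbatim in $\mathbb{Z}_n^d$, giving $(1-o(1))\binom{r}{kh-1}(kh-1)^{|A|}$.

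For the typical-structure statement, let $N$ be the number of rainbow-free colorings of $A$ in $\mathbb{Z}_n^d$ that are not $(kh-1)$-colorings. By the inclusion, $N$ is at most the corresponding count in $[n]^d$. Theorem~\ref{th:Bkh} says that count is $o(1)\cdot g_r(A)=o\bigl(\binom{r}{kh-1}(kh-1)^{|A|}\bigr)$. Since $g_r(A;\mathbb{Z}_n^d)$ has the same order $\binom{r}{kh-1}(kh-1)^{|A|}$, we get $N=o(g_r(A;\mathbb{Z}_n^d))$.

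The furthermore part is the main obstacle. We must show $g_r(A;\mathbb{Z}_n^d)<g_r(\mathbb{Z}_n^d;\mathbb{Z}_n^d)$ for every proper subset $A$. We split into two cases.

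\emph{Dense case}, $|A|\ge n^d-n^d/\log n$. Here the asymptotic formula in $\mathbb{Z}_n^d$ already gives a factor $(kh-1)^{n^d-|A|}\ge kh-1\ge 3$ in favour of $\mathbb{Z}_n^d$. So $g_r(\mathbb{Z}_n^d;\mathbb{Z}_n^d)$ is larger by a constant factor $>1$, and the $1\pm o(1)$ errors are absorbed.

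\emph{Sparse case}, $|A|< n^d-n^d/\log n$. The plan is to use the inequality $g_r(A;\mathbb{Z}_n^d)\le g_r(A)$. The proof of Theorem~\ref{th:Bkh} shows that for such sparse $A$ one has $g_r(A)\le (kh-1)^{n^d-cn^d/\log n}$, or more generally $g_r(A)$ is much smaller than $\binom{r}{kh-1}(kh-1)^{n^d}$; this is the bound its uniqueness argument relies on. This bound is then well below the lower bound $(1-o(1))\binom{r}{kh-1}(kh-1)^{n^d}$ for $g_r(\mathbb{Z}_n^d;\mathbb{Z}_n^d)$.

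The delicate point is that we are only allowed to invoke the statement of Theorem~\ref{th:Bkh}, and that statement compares $g_r(A)$ to $g_r([n]^d)$, not to $g_r(\mathbb{Z}_n^d;\mathbb{Z}_n^d)$, which may be smaller. So we must extract from its proof the explicit quantitative bound for sparse $A$ rather than the bare maximality statement. First I would check that the container-based bound in that proof has the form $g_r(A)\le 2^{-\Omega(n^d/\log n)}\,(kh-1)^{n^d}$ for sparse $A$. That form is comfortably enough to finish the comparison.
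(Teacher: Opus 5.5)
Your proposal is correct and follows the paper's route. The comparison $g_r(A;\mathbb{Z}_n^d)\le g_r(A)$ (via the residue-preserving identification), together with the $(kh-1)$-coloring lower bound, gives the counting and typical-structure parts. The furthermore part comes, exactly as the paper indicates, from the explicit bounds inside the proof of Theorem~\ref{th:Bkh} combined with the dense-case asymptotics, since its bare uniqueness statement cannot be used directly. In the sparse and medium ranges you do not even need the $2^{-\Omega(n^d/\log n)}$ saving: the proof's bound $g_r(A)<(kh-1)^{n^d}$ already suffices, because $g_r(\mathbb{Z}_n^d;\mathbb{Z}_n^d)\ge(1-o(1))\binom{r}{kh-1}(kh-1)^{n^d}\ge(1-o(1))\,kh\,(kh-1)^{n^d}$.
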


The remainder of this paper is organized as follows.
In the next section, we supply a proof sketch of our main result.
In Section~\ref{sec:container}, we first present several counting results on solutions to the $B_{k,h}$-equation, and then provide a container theorem with respect to forbidden rainbow solutions to the $B_{k,h}$-equation.
In Section~\ref{sec:sta}, we establish certain stability phenomenon for the templates obtained from the container theorem.
In Section~\ref{sec:proof}, we complete our proof of Theorem~\ref{th:Bkh}.
Finally, we conclude the article with some remarks and open problems in Section~\ref{sec:conclu}.

\section{Proof sketch of Theorem~\ref{th:Bkh}}
\label{sec:sketch}

In this section, we supply a proof sketch of Theorem~\ref{th:Bkh}.
In our proof, expanding on the Hypergraph Container Method developed independently by Balogh-Morris-Samotij~\cite{BaMS} and Saxton-Thomason~\cite{SaTh},
we introduce a stability analysis and a deviation gain argument for counting colorings without rainbow solutions to the $B_{k,h}$-equation.
Our proof consists of four steps as follows:
\begin{itemize}[leftmargin=0.6cm, itemindent=1.1cm]
\item[{\bf Step 1.}] {\bf Establishing the container theorem with respect to forbidden rainbow solutions to the $B_{k,h}$-equation (see Theorem~\ref{th:BkhCT}).}
More precisely, we show that there exists a collection $\mathcal{C}$ of the so-called $r$-templates such that:
(1)~every $r$-coloring without rainbow solutions to the $B_{k,h}$-equation is a subtemplate of some template in $\mathcal{C}$;
(2)~each template in $\mathcal{C}$ contains a small number of rainbow solutions to the $B_{k,h}$-equation;
(3)~the size of $\mathcal{C}$ is small.
It is worth noting that our container theorem not only holds for the grid $[n]^d$, but also for any subset $A\subseteq [n]^d$ with $|A|\geq \frac{n^d}{\log n}$.
This enables us to apply the theorem when analyzing medium sets in Step~4 of our proof.
\vspace{-0.2cm}

\item[{\bf Step 2.}] {\bf Revealing the stability phenomenon for the collection $\mathcal{C}$ of templates given by Step 1.}
We show that $\mathcal{C}$ can be partitioned into two parts $\mathcal{C}_{good}$ and $\mathcal{C}_{bad}$ such that:
(1)~for every template $P\in \mathcal{C}_{good}$, there exists a set $C$ of $kh-1$ common colors so that almost all points are colored by these colors;
(2)~the number of $r$-colorings related to $\mathcal{C}_{bad}$ are sufficiently small.
\vspace{-0.1cm}

\item[{\bf Step 3.}] {\bf Completing the proof for counting and typical structures.}
From Step 2, we know that for templates in $\mathcal{C}_{good}$, almost all points receive a limited set of colors.
The remaining main task is to study the deviation gain, that is, the number of $r$-colorings in which at least one point $v$ receives an unrestricted color.
To achieve this, we first construct a large number of pairwise disjoint (except for the point $v$) solutions containing $v$.
Then, by using the property of forbidden rainbow solutions, we can effectively estimate the deviation gain (see Lemma~\ref{le:dev}).
\vspace{-0.1cm}

\item[{\bf Step 4.}] {\bf Completing the proof for the extremal structure part.}
We show that among all subsets of $[n]^d$, $[n]^d$ itself is the unique subset admitting the maximum number of $r$-colorings without rainbow solutions to the $B_{k,h}$-equation.
To achieve this, we consider three cases based on the density of $A\subseteq [n]^d$: sparse sets ($|A|\leq \frac{n^d}{\log n}$), medium sets ($\frac{n^d}{\log n}\leq |A|\leq n^d-\frac{n^d}{\log n}$) and dense sets ($|A|\geq n^d-\frac{n^d}{\log n}$).
In particular, for medium sets, we also utilize the results obtained from Steps 1 and 2.
\vspace{-0.1cm}
\end{itemize}

\section{Preliminaries}
\label{sec:container}

In this section, we first state and prove several counting results on solutions to the $B_{k,h}$-equation in Section~\ref{subsec:f}.
Then in Section~\ref{subsec:container}, we introduce the hypergraph container method, and establish a container theorem with respect to forbidden rainbow solutions to the $B_{k,h}$-equation.

Throughout this paper, all logarithms are taken with base 2, unless otherwise indicated.
Given a set $A$ and a positive integer $t$, let ${A\choose t}\colonequals \{B\subseteq A\colon\, |B|=t\}$ (i.e., the set of all $t$-element subsets of $A$).
We should remark that since we focus on rainbow solutions in colorings, it suffices to study solutions to the $B_{k,h}$-equation in which $x_{\ell,i}$ ($\ell \in [k]$, $i\in [h]$) are pairwise distinct.
In this paper, whenever we refer to a set, it is implicitly understood that the elements within the set are pairwise distinct and the order of elements is irrelevant;
when discussing a tuple, the elements may not be distinct and the order is relevant.
Moreover, we view the union $X\cup Y$ of two sets as one set, so in particular, the elements within $X\cup Y$ are pairwise distinct and the order of elements is irrelevant.
We also remark that we do not make any serious attempt to optimize absolute constants in our statements and proofs.
For clarity, we denote the constant $c$ in each conclusion by appending the label number of the conclusion as a subscript.
For instance, the constant in Lemma~\ref{le:fdkh} is denoted by $c_{\ref{le:fdkh}}$.

\subsection{Counting solutions to the $B_{k,h}$-equation}
\label{subsec:f}

For any subset $A\subseteq [n]^d$, define $f_{d,k,h}(A)$ as the number of distinct subsets $X$ of $A$ with $|X|=kh$, such that there exists a way to partition the elements of $X$ into $k$ groups of $h$ elements each, and the sum of elements in each group is equal.
In other words, we can partition $X$ into $k$ subsets $\{x_{\ell,1}, \ldots, x_{\ell,h}\}$ ($\ell \in [k]$) so that $\sum_{i=1}^{h}x_{1,i}=\sum_{i=1}^{h}x_{2,i}=\cdots =\sum_{i=1}^{h}x_{k,i}$.
We will use the following lower and upper bounds on $f_{d,k,h}(A)$ in our proofs.

\begin{lemma}\label{le:fdkh}
Let $d, k, h, n$ be integers with $d\geq 1$, $k\geq 2$, $h\geq 2$ and $n$ sufficiently large.
For any subset $A\subseteq [n]^d$ with $|A|\geq n^{\frac{k-1}{k}d}\log n$, there exists a constant $c_{\ref{le:fdkh}}=c_{\ref{le:fdkh}}(d,k,h)>0$ such that
$$c_{\ref{le:fdkh}}\frac{|A|^{kh}}{n^{d(k-1)}} \leq f_{d,k,h}(A) \leq |A|^{k(h-1)+1}.$$
\end{lemma}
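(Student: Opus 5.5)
The upper bound is the easy direction, and I would handle it first by counting ordered tuples. Any counted set $X$ can be partitioned into $k$ groups $\{x_{\ell,1},\ldots,x_{\ell,h}\}$ with equal sums, so $X$ is determined once we know $x_{\ell,1},\ldots,x_{\ell,h-1}$ for every $\ell\in[k]$ together with $x_{1,h}$. Indeed, the common sum $s=\sum_i x_{1,i}$ is then fixed, and each $x_{\ell,h}=s-\sum_{i<h}x_{\ell,i}$ is forced. This gives at most $|A|^{k(h-1)+1}$ choices, hence $f_{d,k,h}(A)\le |A|^{k(h-1)+1}$.

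For the lower bound I would use Cauchy--Schwarz, or more precisely the power mean inequality, on representation counts. For $s\in[kh... ]$ (more precisely $s\in\{h,\ldots,hn\}^d$), let $r(s)$ be the number of ordered $h$-tuples $(x_1,\ldots,x_h)\in A^h$ with sum $s$. Then $\sum_s r(s)=|A|^h$, and the number of possible sums is at most $(hn)^d$. By Hölder,
\[
\sum_s r(s)^k\ \ge\ \frac{|A|^{hk}}{(hn)^{d(k-1)}}.
\]
The left side counts ordered $kh$-tuples $(x_{\ell,i})$ satisfying the $B_{k,h}$-equation. From this I need to subtract the degenerate tuples, namely those in which two of the $kh$ entries coincide. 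Each nondegenerate tuple yields a $kh$-set $X$ counted by $f$, and each such $X$ arises from at most $(kh)!$ ordered tuples. So
\[
f_{d,k,h}(A)\ \ge\ \frac{1}{(kh)!}\Bigl(\frac{|A|^{kh}}{(hn)^{d(k-1)}}-D\Bigr),
\]
where $D$ is the number of degenerate solutions.

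The main obstacle is bounding $D$ by a small fraction of the main term, and this is exactly where the hypothesis $|A|\ge n^{\frac{k-1}{k}d}\log n$ enters. Fix which pair of positions coincide; there are $\binom{kh}{2}$ choices. If the two equal entries lie in the same group, or in different groups, we lose one degree of freedom compared with the $k(h-1)+1$ free parameters in the upper-bound count. I would instead bound degenerate tuples as follows: merging the two entries leaves a system in which we choose $kh-1$ values subject to the $k-1$ vector equations. Each equation, once all but one variable in its group is chosen, pins down one variable, so the count is at most $|A|^{kh-1-(k-1)}=|A|^{k(h-1)}$. This requires checking that after identification each of the $k-1$ constraints still determines a fresh variable. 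That holds: order the groups so that the merged variable is not the last free one in at least $k-1$ of the groups, which is possible since $h\ge2$ means each group has at least two entries. Hence $D\le \binom{kh}{2}|A|^{k(h-1)}$.

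The comparison then reduces to checking
\[
|A|^{k(h-1)}\ll \frac{|A|^{kh}}{n^{d(k-1)}},
\]
which is equivalent to $|A|^k\gg n^{d(k-1)}$. This follows from $|A|\ge n^{\frac{k-1}{k}d}\log n$, since then $|A|^k\ge n^{d(k-1)}(\log n)^k$, which beats the constants $(kh)^2h^{d(k-1)}$ for large $n$. Taking, say, half the main term gives the claim with $c_{\ref{le:fdkh}}=\bigl(2(kh)!\,h^{d(k-1)}\bigr)^{-1}$.
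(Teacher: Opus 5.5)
Your proof is correct and follows essentially the paper's route: the same $k(h-1)+1$ free-parameter count for the upper bound, a convexity bound over the at most $(hn)^d$ possible sums, removal of at most $\binom{kh}{2}|A|^{k(h-1)}$ degenerate solutions by the same choice of reference group, and the hypothesis $|A|^k\geq n^{d(k-1)}(\log n)^k$ to absorb them. The only difference is cosmetic: you count ordered tuples, apply the power-mean inequality and divide by $(kh)!$, whereas the paper counts unordered $h$-sets, applies Jensen to a convex extension of $\binom{x}{k}$ and divides by the number of partitions of a $kh$-set into $k$ groups of size $h$; your version is slightly cleaner and gives the explicit constant $\bigl(2(kh)!\,h^{d(k-1)}\bigr)^{-1}$.
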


\begin{proof}
For the upper bound, note that there are at most ${|A|\choose k(h-1)+1}$ choices of $\{x_{1,i}\colon\, 1\leq i\leq h\}\cup \{x_{\ell,i}\colon\, 2\leq \ell\leq k, 1\leq i\leq h-1\}$,
and after choosing these points, there is at most one choice of $x_{\ell,h}$ satisfying $\sum_{i=1}^{h}x_{1,i}=\sum_{i=1}^{h}x_{\ell,i}$ for each $2\leq \ell\leq k$.
Hence, we have $f_{d,k,h}(A) \leq {|A|\choose k(h-1)+1}\leq |A|^{k(h-1)+1}.$

We next consider the lower bound.
Note that for every $\ell\in [k]$, $\sum_{i=1}^{h}x_{\ell,i}$ is a point in $[hn]^d$.
For any point $s\in [hn]^d$, let $m_A(s)$ be the number of subsets $\{x_1, \ldots, x_h\}\subseteq A$ with $x_1+\cdots +x_h=s$.
Then $\sum_{s\in [hn]^d}m_A(s)={|A|\choose h}\geq \left(\frac{|A|}{h}\right)^h$.
Let $M_1$ be the number of sets $\big\{\{x_{1,1}, \ldots, x_{1,h}\}, \ldots, \{x_{k,1}, \ldots, x_{k,h}\}\big\}$ consisting of $k$ subsets
with $\sum_{i=1}^{h}x_{1,i}=\sum_{i=1}^{h}x_{2,i}=\cdots =\sum_{i=1}^{h}x_{k,i}$, where $x_{\ell,i}$ ($\ell \in [k]$, $i\in [h]$) are $kh$ (not necessarily pairwise distinct) points in $A$.
By Jensen's inequality\footnote{We view ${x\choose k}$ as a convex function on the reals by defining
${x\choose k}\colonequals
\left\{
   \begin{aligned}
    &x(x-1)\cdots(x-k+1)/k!, & & \mbox{if $x\geq k-1$},\\
    &0, & & \mbox{if $x<k-1$}.
   \end{aligned}
   \right.$},
we have
\begin{align}\label{eq:fdkhM1}
  M_1= &~\sum_{s\in [hn]^d} {m_A(s) \choose k}\geq (hn)^d{\frac{1}{(hn)^d}\sum_{s\in [hn]^d}m_A(s) \choose k} \geq (hn)^d{\frac{1}{(hn)^d}\left(\frac{|A|}{h}\right)^h \choose k} \nonumber\\
   \geq &~(hn)^d \frac{1}{(hn)^{dk}}\left(\frac{|A|}{h}\right)^{hk} \frac{1}{k^k} = \frac{1}{k^k h^{d(k-1)+hk}}\frac{1}{n^{d(k-1)}}|A|^{kh}.
\end{align}

Let $M_2$ be the number of $(kh)$-tuples $(x_{1,1}, \ldots, x_{1,h}, \ldots, x_{k,1}, \ldots, x_{k,h})\in A^{kh}$ such that:
(1)~$\sum_{i=1}^{h}x_{1,i}=\sum_{i=1}^{h}x_{2,i}=\cdots =\sum_{i=1}^{h}x_{k,i}$;
(2)~$x_{\ell,i}$ ($\ell \in [k]$, $i\in [h]$) are not pairwise distinct.
Since the number of ways to partition a $(kh)$-element set into $k$ groups each containing $h$ elements is $\frac{{kh \choose h}{kh-h \choose h}\cdots {h\choose h}}{k!}$, we have
\begin{equation}\label{eq:fdkh}
f_{d,k,h}(A)\geq (M_1-M_2)/\frac{{kh \choose h}{kh-h \choose h}\cdots {h\choose h}}{k!}.
\end{equation}

\begin{claim}\label{cl:fdkh}
$M_2\leq {kh\choose 2}|A|^{k(h-1)}$.
\end{claim}

\begin{proof}
Note that if $x_{\ell,i}$ ($\ell \in [k]$, $i\in [h]$) are not pairwise distinct, then there exists a point $y\in A$ which appears at least twice among these $kh$ points.
There are $|A|$ choices of a point $y$ in $A$, and ${kh\choose 2}$ choices of two points in
$x_{1,1}, \ldots, x_{1,h}, \ldots, x_{k,1}, \ldots, x_{k,h}$ to be $y$.
We now fix a choice of $y$ and a choice of the two points arbitrarily.
Without loss of generality, if the two points have the same first subscript, we may assume that they are $x_{1,1}$ and $x_{1,2}$;
if the two points have distinct first subscripts, we may assume that they are $x_{1,1}$ and $x_{2,1}$.
Then there are at most $|A|^{k(h-1)-1}$ choices of
$x_{1,i}$ ($1\leq i\leq h$) and $x_{\ell,i}$ ($2\leq \ell\leq k$, $1\leq i\leq h-1$),
and after choosing these points, there is at most one choice of $x_{\ell,h}$ satisfying $\sum_{i=1}^{h}x_{1,i}=\sum_{i=1}^{h}x_{\ell,i}$ for each $2\leq \ell\leq k$.
Hence, we have $M_2\leq |A|{kh\choose 2}|A|^{k(h-1)-1}={kh\choose 2}|A|^{k(h-1)}$.
\end{proof}

By Inequality~(\ref{eq:fdkhM1}) and Claim~\ref{cl:fdkh}, we have
$$M_1-M_2\geq \frac{1}{k^k h^{d(k-1)+hk}}\frac{1}{n^{d(k-1)}}|A|^{kh}-{kh\choose 2}|A|^{k(h-1)}.$$
Moreover, since $|A|\geq n^{\frac{k-1}{k}d}\log n$ and $n$ is sufficiently large, we have $\frac{|A|^{k(h-1)}}{\frac{1}{n^{d(k-1)}}|A|^{kh}}=\frac{n^{d(k-1)}}{|A|^k}\leq \frac{n^{d(k-1)}}{\left(n^{\frac{k-1}{k}d}\log n\right)^k}=\frac{1}{(\log n)^k}=o(1).$
Hence, we have $$M_1-M_2\geq (1-o(1))\frac{1}{k^k h^{d(k-1)+hk}}\frac{|A|^{kh}}{n^{d(k-1)}}.$$
By setting $c_{\ref{le:fdkh}}\colonequals \frac{(1-o(1))k!}{k^k h^{d(k-1)+hk} {kh \choose h}{kh-h \choose h}\cdots {h\choose h}}$ and recalling Inequality~(\ref{eq:fdkh}), we further have
$$f_{d,k,h}(A)\geq (1-o(1))\frac{1}{k^k h^{d(k-1)+hk}}\frac{|A|^{kh}}{n^{d(k-1)}}/\frac{{kh \choose h}{kh-h \choose h}\cdots {h\choose h}}{k!} =c_{\ref{le:fdkh}}\frac{|A|^{kh}}{n^{d(k-1)}}.$$
The proof of Lemma~\ref{le:fdkh} is complete.
\end{proof}

For two disjoint subsets $A_1, A_2\subseteq [n]^d$ and an integer $j\in[h-1]$,
define $f_{d,k,h,j}(A_1,A_2)$ as the number of distinct subsets $\{x_{\ell,i}\colon\, 1\leq \ell\leq k, 1\leq i\leq h\}$
with $\{x_{\ell,i}\colon\, 1\leq \ell\leq k, 1\leq i\leq j\}\subseteq A_1$, $\{x_{\ell,i}\colon\, 1\leq \ell\leq k, j+1\leq i\leq h\}\subseteq A_2$
and $\sum_{i=1}^{h}x_{1,i}=\sum_{i=1}^{h}x_{2,i}=\cdots =\sum_{i=1}^{h}x_{k,i}$.
We now prove a lower bound on $f_{d,k,h,j}(A_1, A_2)$ by an analogous argument as the proof of Lemma~\ref{le:fdkh}.

\begin{lemma}\label{le:fdkhj}
Let $d, k, h, j, n$ be integers with $d\geq 1$, $h\geq 2$, $1\leq j\leq h-1$ and $n$ sufficiently large.
For any two disjoint subsets $A_1, A_2\subseteq [n]^d$ with $|A_1|, |A_2|\geq n^{\frac{k-1}{k}d}\log n$, there exists a constant $c_{\ref{le:fdkhj}}=c_{\ref{le:fdkhj}}(d,k,h,j)>0$ such that
$$f_{d,k,h,j}(A_1, A_2) \geq c_{\ref{le:fdkhj}}\frac{1}{n^{d(k-1)}}|A_1|^{kj}|A_2|^{k(h-j)}.$$
\end{lemma}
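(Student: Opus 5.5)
We mirror the proof of Lemma~\ref{le:fdkh}, replacing the single $h$-fold sum by a mixed sum with $j$ summands from $A_1$ and $h-j$ summands from $A_2$. For $s\in[hn]^d$, let $m(s)$ be the number of pairs $(S_1,S_2)$ with $S_1\in\binom{A_1}{j}$, $S_2\in\binom{A_2}{h-j}$ and $\sum_{x\in S_1}x+\sum_{y\in S_2}y=s$. Since $A_1\cap A_2=\emptyset$, each such pair gives an $h$-element set $S_1\cup S_2$, and
$$\sum_{s\in[hn]^d}m(s)=\binom{|A_1|}{j}\binom{|A_2|}{h-j}\ge \frac{|A_1|^j|A_2|^{h-j}}{j^j(h-j)^{h-j}}.$$

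Let $M_1=\sum_s\binom{m(s)}{k}$. This counts unordered $k$-collections of distinct such pairs having a common sum. By Jensen's inequality, using the convex extension of $\binom{x}{k}$ as in the footnote,
$$M_1\ge (hn)^d\binom{(hn)^{-d}\sum_s m(s)}{k}\ge c\,\frac{|A_1|^{kj}|A_2|^{k(h-j)}}{n^{d(k-1)}}.$$
For this step we need the average $(hn)^{-d}\sum_s m(s)$ to be at least $2k$, which holds because $|A_1|^j|A_2|^{h-j}\ge (n^{\frac{k-1}{k}d}\log n)^h\gg n^d$.

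Next let $M_2$ count the degenerate configurations, namely $kh$-tuples with $x_{\ell,i}\in A_1$ for $i\le j$ and $x_{\ell,i}\in A_2$ for $i>j$, satisfying the equation, but not pairwise distinct. A repeated point lies in both slots' sets; by disjointness, both slots belong to $A_1$ or both to $A_2$. Fix the repeated point $y$ and the two slots, in at most $\binom{kh}{2}$ ways. Choose all of the first group and the first $h-1$ entries of groups $2,\dots,k$ freely, except the one determined by $y$. The last entry $x_{\ell,h}$ (in $A_2$) of each group $\ell\ge2$ is then determined. This gives
$$M_2\le \binom{kh}{2}\,\frac{|A_1|^{kj}|A_2|^{k(h-j)}}{\min(|A_1|,|A_2|)^{k}}\cdot\max(|A_1|,|A_2|)^{\,?}.$$
More carefully: we remove $k-1$ free choices from $A_2$ (the determined last entries) and one further choice (the coincidence). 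This yields
$$M_2\le \binom{kh}{2}\frac{|A_1|^{kj}|A_2|^{k(h-j)}}{|A_2|^{k-1}\min(|A_1|,|A_2|)}.$$
The ratio $M_2/M_1$ is then at most
$$O\!\left(\frac{n^{d(k-1)}}{|A_2|^{k-1}\min(|A_1|,|A_2|)}\right)\le O\!\left(\frac{n^{d(k-1)}}{(n^{\frac{k-1}{k}d}\log n)^k}\right)=O\big((\log n)^{-k}\big)=o(1).$$

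Each non-degenerate configuration counted in $M_1-M_2$ corresponds to a set counted by $f_{d,k,h,j}(A_1,A_2)$. The converse multiplicity is bounded by a constant depending only on $k,h,j$: given the $kh$-set $X$, there are at most $\binom{kh}{h}\cdots\binom{h}{h}/k!$ ways to split it into $k$ groups, each group meeting $A_1$ in exactly $j$ points. Dividing, we get $f_{d,k,h,j}\ge c_{\ref{le:fdkhj}}\,n^{-d(k-1)}|A_1|^{kj}|A_2|^{k(h-j)}$.

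The main obstacle is the degenerate count $M_2$. One must check that the repeated point can be placed in a slot that is not one of the determined last entries; this is possible by relabeling within groups, since every group has at least one slot in each of $A_1$ and $A_2$ (as $1\le j\le h-1$). One must also check that the loss factor is at least $|A_2|^{k-1}\min(|A_1|,|A_2|)$, which, combined with the hypothesis $|A_1|,|A_2|\ge n^{\frac{k-1}{k}d}\log n$, is exactly what is needed.
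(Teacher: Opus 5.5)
Your route is the paper's: count pairs $(S_1,S_2)$ by their sum, apply Jensen to $\sum_s\binom{m(s)}{k}$, subtract the degenerate tuples $M_2$, and divide by the number of ways to partition a $kh$-set into $k$ groups of size $h$. The one step that does not hold as written is your bound
$$M_2\le\binom{kh}{2}\frac{|A_1|^{kj}|A_2|^{k(h-j)}}{|A_2|^{k-1}\min(|A_1|,|A_2|)},$$
which you get by keeping group $1$ free and always determining the $A_2$-entries $x_{\ell,h}$ for $\ell\ge 2$.

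Take $j=h-1$ (the case used later in the paper), $k\ge 3$, and a repeated point $y\in A_2$ sitting in $x_{2,h}$ and $x_{3,h}$. These are the only $A_2$-slots of groups $2$ and $3$, so no relabelling within a group moves $y$ off a determined slot. With your scheme unchanged, the saving is only $|A_2|^{k-2}$, which is far too little. If, following your hint, you instead determine an $A_1$-slot in groups $2$ and $3$, the saving is $|A_1|^2|A_2|^{k-2}$. That is smaller than $|A_2|^{k-1}\min(|A_1|,|A_2|)$ whenever $|A_1|<|A_2|$, so the displayed inequality is not established by your argument.

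The repair is easy and leaves your conclusion intact. You never need that particular denominator. It suffices that the saving is a product of $k$ factors, each equal to $|A_1|$ or $|A_2|$ and hence each at least $n^{\frac{k-1}{k}d}\log n$; this already gives $M_2/M_1=O((\log n)^{-k})$.

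To get such a saving, in each group $\ell\ge 2$ determine a slot of the type opposite to $y$. By disjointness both coincidence slots have $y$'s type, and $1\le j\le h-1$ guarantees every group has a slot of each type. This is exactly what the paper does: it splits on $y\in A_1$ versus $y\in A_2$. In the first case it determines the $x_{\ell,h}\in A_2$, and in the second the $x_{\ell,1}\in A_1$, so the coincidence slots are always free. This gives
$$M_2\le\binom{kh}{2}\max\bigl\{|A_1|^{kj-1}|A_2|^{k(h-j)-(k-1)},\,|A_1|^{kj-(k-1)}|A_2|^{k(h-j)-1}\bigr\},$$
and both terms yield a ratio of at most $(\log n)^{-k}$.

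Also remove the draft display with the undetermined exponent. Your extra check that the average of $m(s)$ is at least $k$, so that $\binom{x}{k}\ge (x/k)^k$ applies, is a welcome detail the paper leaves implicit.
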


\begin{proof}
For any point $s\in [hn]^d$, let $m_{A_1, A_2}(s)$ be the number of subsets $\{x_1, \ldots, x_{j}\}\cup\{x_{j+1}, \ldots, x_{h}\}$ with $x_1, \ldots, x_{j}\in A_1$, $x_{j+1}, \ldots, x_{h}\in A_2$ and $x_1+\cdots +x_h=s$.
Then $\sum_{s\in [hn]^d}m_{A_1, A_2}(s)={|A_1|\choose j}{|A_2|\choose h-j}\geq \frac{1}{j^j(h-j)^{h-j}}|A_1|^j|A_2|^{h-j}$.
Let $M_1$ be the number of sets $\big\{\{x_{1,1}, \ldots, x_{1,h}\}, \ldots, \{x_{k,1}, \ldots, x_{k,h}\}\big\}$ consisting of $k$ subsets
with $\{x_{\ell,i}\colon\, 1\leq \ell\leq k, 1\leq i\leq j\}\subseteq A_1$, $\{x_{\ell,i}\colon\, 1\leq \ell\leq k, j+1\leq i\leq h\}\subseteq A_2$
and $\sum_{i=1}^{h}x_{1,i}=\sum_{i=1}^{h}x_{2,i}=\cdots =\sum_{i=1}^{h}x_{k,i}$, where $x_{\ell,i}$ ($\ell \in [k]$, $i\in [h]$) are not necessarily pairwise distinct.
By Jensen's inequality, we have
\begin{align}\label{eq:fdkhj}
  M_1= &~\sum_{s\in [hn]^d} {m_{A_1, A_2}(s) \choose k}\geq (hn)^d{\frac{1}{(hn)^d}\sum_{s\in [hn]^d}m_{A_1, A_2}(s) \choose k}\nonumber\\
   \geq &~(hn)^d{\frac{1}{(hn)^d}\frac{1}{j^j(h-j)^{h-j}}|A_1|^j|A_2|^{h-j} \choose k}\geq \frac{1}{k^kh^{d(k-1)}j^{kj}(h-j)^{k(h-j)}}\frac{|A_1|^{kj}|A_2|^{k(h-j)}}{n^{d(k-1)}}.
\end{align}

Let $M_2$ be the number of $(kh)$-tuples $(x_{1,1}, \ldots, x_{1,h}, \ldots, x_{k,1}, \ldots, x_{k,h})$
with $\{x_{\ell,i}\colon\, 1\leq \ell\leq k, 1\leq i\leq j\}\subseteq A_1$ and $\{x_{\ell,i}\colon\, 1\leq \ell\leq k, j+1\leq i\leq h\}\subseteq A_2$
such that:
(1)~$\sum_{i=1}^{h}x_{1,i}=\sum_{i=1}^{h}x_{2,i}=\cdots =\sum_{i=1}^{h}x_{k,i}$;
(2)~$x_{\ell,i}$ ($\ell \in [k]$, $i\in [h]$) are not pairwise distinct.

\begin{claim}\label{cl:fdkhj}
$M_2\leq {kh \choose 2}\cdot \max\left\{|A_1|^{kj-1}|A_2|^{k(h-j)-(k-1)}, |A_1|^{kj-(k-1)}|A_2|^{k(h-j)-1}\right\}$.
\end{claim}

\begin{proof}
If $x_{\ell,i}$ ($\ell \in [k]$, $i\in [h]$) are not pairwise distinct, then there exists a point $y\in A_1\cup A_2$ which appears at least twice among these $kh$ points.
For the case $y\in A_1$, there are $|A_1|$ choices of a point $y$ in $A_1$,
and ${kj\choose 2}$ choices of two points in $x_{1,1}, \ldots, x_{1,j}, \ldots, x_{k,1}, \ldots, x_{k,j}$ to be $y$.
Fix a choice of $y$ and a choice of the two points.
Then there are at most $|A_1|^{kj-2}|A_2|^{k(h-j)-(k-1)}$ choices of
$x_{1,i}$ ($1\leq i\leq h$) and $x_{\ell,i}$ ($2\leq \ell\leq k$, $1\leq i\leq h-1$),
and after choosing these points, there is at most one choice of $x_{\ell,h}$ satisfying $\sum_{i=1}^{h}x_{1,i}=\sum_{i=1}^{h}x_{\ell,i}$ for each $2\leq \ell\leq k$.
Thus there is a contribution of at most $|A_1|{kj\choose 2}|A_1|^{kj-2}|A_2|^{k(h-j)-(k-1)}={kj\choose 2}|A_1|^{kj-1}|A_2|^{k(h-j)-(k-1)}$ to $M_2$ in this case.

For the case $y\in A_2$, there are $|A_2|$ choices of a point $y$ in $A_2$,
and ${k(h-j)\choose 2}$ choices of two points in $x_{1,j+1}, \ldots, x_{1,h}, \ldots, x_{k,j+1}, \ldots, x_{k,h}$ to be $y$.
Fix a choice of $y$ and a choice of the two points.
Then there are at most $|A_1|^{kj-(k-1)}|A_2|^{k(h-j)-2}$ choices of
$x_{1,i}$ ($1\leq i\leq h$) and $x_{\ell,i}$ ($2\leq \ell\leq k$, $2\leq i\leq h$),
and after choosing these points, there is at most one choice of $x_{\ell,1}$ satisfying $\sum_{i=1}^{h}x_{1,i}=\sum_{i=1}^{h}x_{\ell,i}$ for each $2\leq \ell\leq k$.
Thus there is a contribution of at most $|A_2|{k(h-j)\choose 2}|A_1|^{kj-(k-1)}|A_2|^{k(h-j)-2}={k(h-j)\choose 2}|A_1|^{kj-(k-1)}|A_2|^{k(h-j)-1}$ to $M_2$ in this case.

Therefore, we have
\begin{align*}
M_2\leq &~{kj\choose 2}|A_1|^{kj-1}|A_2|^{k(h-j)-(k-1)}+{k(h-j)\choose 2}|A_1|^{kj-(k-1)}|A_2|^{k(h-j)-1} \\
\leq &~{kh \choose 2}\cdot \max\left\{|A_1|^{kj-1}|A_2|^{k(h-j)-(k-1)}, |A_1|^{kj-(k-1)}|A_2|^{k(h-j)-1}\right\}.
\end{align*}
The proof of Claim~\ref{cl:fdkhj} is complete.
\end{proof}

Since $|A_1|, |A_2|\geq n^{\frac{k-1}{k}d}\log n$ and $n$ is sufficiently large, we have
$$\frac{|A_1|^{kj-1}|A_2|^{k(h-j)-(k-1)}}{\frac{|A_1|^{kj}|A_2|^{k(h-j)}}{n^{d(k-1)}}}=\frac{n^{d(k-1)}}{|A_1||A_2|^{k-1}}\leq \frac{n^{d(k-1)}}{\left(n^{\frac{k-1}{k}d}\log n\right)^{k}}=\frac{1}{(\log n)^k}=o(1)$$
and
$$\frac{|A_1|^{kj-(k-1)}|A_2|^{k(h-j)-1}}{\frac{|A_1|^{kj}|A_2|^{k(h-j)}}{n^{d(k-1)}}}=\frac{n^{d(k-1)}}{|A_1|^{k-1}|A_2|}\leq \frac{n^{d(k-1)}}{\left(n^{\frac{k-1}{k}d}\log n\right)^{k}}=\frac{1}{(\log n)^k}=o(1).$$
Combining with Inequality~(\ref{eq:fdkhj}) and Claim~\ref{cl:fdkhj}, we further have
$$M_1-M_2 \geq (1-o(1))\frac{1}{k^kh^{d(k-1)}j^{kj}(h-j)^{k(h-j)}}\frac{|A_1|^{kj}|A_2|^{k(h-j)}}{n^{d(k-1)}}.$$
Since the number of ways to partition a $(kh)$-element set into $k$ groups each containing $h$ elements is $\frac{{kh \choose h}{kh-h \choose h}\cdots {h\choose h}}{k!}$, we further have
\begin{align*}
  f_{d,k,h,j}(A_1, A_2)\geq &~(M_1-M_2)/\frac{{kh \choose h}{kh-h \choose h}\cdots {h\choose h}}{k!}\\
   \geq &~(1-o(1))\frac{k!}{k^kh^{d(k-1)}j^{kj}(h-j)^{k(h-j)}{kh \choose h}{kh-h \choose h}\cdots {h\choose h}}\frac{|A_1|^{kj}|A_2|^{k(h-j)}}{n^{d(k-1)}}.
\end{align*}
This completes the proof of Lemma~\ref{le:fdkhj}.
\end{proof}

For any subset $A\subseteq [n]^d$ and point $v\in A$, define $f_{A}(v)$ as the number of distinct subsets $\{x_{1,i}\colon\, 2\leq i\leq h\}\cup \{x_{\ell,i}\colon\, 2\leq \ell\leq k, 1\leq i\leq h\}$ of $A\setminus \{v\}$ such that $v+\sum_{i=2}^{h}x_{1,i}=\sum_{i=1}^{h}x_{2,i}=\cdots =\sum_{i=1}^{h}x_{k,i}$.
We first consider $f_{[n]^d}(v)$ for a corner point $v$ (i.e., its coordinates are either $1$ or $n$) in $[n]^d$.
Then, we provide a lower bound on $f_{A}(v)$ for any point $v$ in a subset $A\subseteq [n]^d$.

\begin{lemma}\label{le:fAv}
Let $d, k, h, n$ be integers with $d\geq 1$, $k\geq 2$, $h\geq 2$ and $n$ sufficiently large.
Then there exists a constant $c_{\ref{le:fAv}}=c_{\ref{le:fAv}}(d,k,h)>0$ such that
for any point $v\in [n]^d$ whose all coordinates are either $1$ or $n$, we have
$$f_{[n]^d}(v)\geq c_{\ref{le:fAv}}n^{dk(h-1)}.$$
\end{lemma}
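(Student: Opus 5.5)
By the symmetry $x\mapsto (n+1)\mathbf{1}-x$ applied coordinatewise to the coordinates where $v(j)=n$, which maps $[n]^d$ to itself and preserves solutions to the $B_{k,h}$-equation, we may assume $v=(1,\ldots,1)$. The plan is to count solutions directly by choosing all but $k-1$ of the free points and solving for the rest, just as in the upper bound of Lemma~\ref{le:fdkh}, but now making sure the solved points land in $[n]^d$ and everything is distinct.

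To make the solved points land in the grid, restrict to a box. Let $I\colonequals\{\lceil n/4\rceil,\ldots,\lfloor n/2\rfloor\}^d$ and $J\colonequals\{\lceil n/2\rceil+1,\ldots,\lfloor 3n/4\rfloor\}^d$. Choose $x_{1,2},\ldots,x_{1,h}$ freely in $J$. For $2\leq \ell\leq k$, choose $x_{\ell,2},\ldots,x_{\ell,h}$ in $I$. Then set
$$x_{\ell,1}\colonequals v+\sum_{i=2}^h x_{1,i}-\sum_{i=2}^h x_{\ell,i}.$$
Coordinatewise, each $x_{1,i}(j)-x_{\ell,i}(j)$ lies in $[1,n/2]$, so $x_{\ell,1}(j)$ lies in $[1+(h-1),\,1+(h-1)n/2]$. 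This is too large when $h\geq 3$, so the ranges have to be tightened. The fix is to make the differences small: choose the $x_{1,i}$ in a box of side about $n/(4h)$ around the centre, and the $x_{\ell,i}$ ($\ell\geq2$, $i\geq2$) in a box of the same side shifted just below it. Each difference then lies in $[1,n/(2h)]$, so $x_{\ell,1}(j)\in[h,\,1+n/2]\subseteq[n]$.

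Every box has side length $\Theta(n/h)$, so the number of choices of the $(h-1)+(k-1)(h-1)=k(h-1)$ free points is at least $c' n^{dk(h-1)}$ with $c'=c'(d,k,h)>0$. The equation $v+\sum_{i\ge2}x_{1,i}=\sum_i x_{\ell,i}$ holds by construction.

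It remains to handle degeneracies: the points $v$, $x_{1,i}$ and $x_{\ell,i}$ must be pairwise distinct, and ordered choices must be converted to sets. The determined points $x_{\ell,1}$ have coordinates $\le 1+n/2$, while other points may also fall in that range. For any pair of the $kh$ points, the tuples in which they coincide number $O(n^{d(k(h-1)-1)})$, since coincidence imposes a nontrivial linear constraint on the free points (each $x_{\ell,1}$ depends on $x_{\ell,2}$ with coefficient $-1$). The coincidence $x_{\ell,1}=v$ forces $\sum_i x_{1,i}=\sum_i x_{\ell,i}$, which is again one constraint. So only $O(n^{dk(h-1)-d})=o(n^{dk(h-1)})$ tuples are degenerate.

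Each set counted by $f_{[n]^d}(v)$ arises from at most $((h-1)!)^k\,(k-1)!\,h^{k-1}$ ordered tuples. Dividing by this bounded multiplicity gives $f_{[n]^d}(v)\geq c_{\ref{le:fAv}}n^{dk(h-1)}$. I expect the main obstacle to be the range bookkeeping, namely choosing the boxes so that every solved coordinate stays in $[1,n]$ uniformly in $h$. The rest is routine.
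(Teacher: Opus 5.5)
Your argument is correct, but it is organized differently from the paper's. The paper does not reflect. It keeps a general corner ($v(j)=1$ for $j\le t$, $v(j)=n$ for $j>t$) and, with $a=\frac{1}{2h(2k-1)}$, builds $2k-1$ pairwise disjoint boxes $A_1,\ldots,A_k,B_2,\ldots,B_k$. Here $A_1$ sits at the corner opposite to $v$, and $A_2,\ldots,A_k$ are placed at successively larger offsets from it. The offsets are tuned so that choosing $\{x_{\ell,2},\ldots,x_{\ell,h}\}\subseteq A_\ell$ forces the solved point $x_{\ell,1}$ into $B_\ell$. Since all boxes are disjoint and avoid $v$, every choice automatically yields $kh$ distinct points, and distinct choices yield distinct sets. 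This gives $f_{[n]^d}(v)\ge\prod_{\ell=1}^{k}\binom{|A_\ell|}{h-1}$ with no error term and no multiplicity factor.

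You instead put all free points of groups $2,\ldots,k$ into a single box adjacent to the box for group $1$. Together with the reflection (legitimate because every group has exactly $h$ summands), this makes the range check a one-line computation. The price is that the solved points $x_{\ell,1}$ for different $\ell$, and the free points sharing that box, may now coincide. You pay for this with the $O(n^{d(k(h-1)-1)})$ degeneracy count and a division by a bounded multiplicity. Both steps are sound as you describe them: each coincidence leaves a nonzero coefficient on some free point, and $x_{\ell,1}=v$ is in fact impossible since every difference is at least $1$.

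One small slip: $((h-1)!)^k(k-1)!\,h^{k-1}$ counts only the ordered tuples compatible with one fixed partition into groups. A set can admit several valid partitions. For example, with $k=3$, $h=4$ and $b+c=e+f$, you may swap $\{b,c\}$ and $\{e,f\}$ between the groups $\{x_{2,1},a,b,c\}$ and $\{x_{3,1},e,f,g\}$. Use the crude bound $(kh-1)!$ instead; nothing else changes.
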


\begin{proof}
Without loss of generality, we may assume that $v=(v(1), \ldots, v(d))$ and for certain $0\leq t\leq d$, we have $v(j)=1$ if $j\leq t$, and $v(j)=n$ if $j\geq t+1$.
Let $a\colonequals \frac{1}{2h(2k-1)}$.
We define $2k-1$ pairwise disjoint subsets $A_1, A_2, \ldots, A_k, B_2, \ldots, B_k$ of $[n]^d$ as follows:
\begin{align*}
  A_1\colonequals \big\{x=(x(1), \ldots, x(d))\in [n]^d\colon\, &~\mbox{$n-an< x(j)\leq n$ for $j\leq t$}, \\
  &~\mbox{$1\leq x(j)< an$ for $j\geq t+1$}\big\},
\end{align*}
\begin{align*}
  A_{\ell}\colonequals \big\{x=(x(1), \ldots, x(d))\in [n]^d\colon\, &~\mbox{$n-(2\ell-1)an< x(j)\leq n-(2\ell-2)an$ for $j\leq t$}, \\
  &~\mbox{$(2\ell-2)an\leq x(j)< (2\ell-1)an$ for $j\geq t+1$}\big\}
\end{align*}
and
\begin{align*}
  B_{\ell}\colonequals &~\big\{x=(x(1), \ldots, x(d))\in [n]^d\colon\, \\
  &~\mbox{~~~~$1+(h-1)(2\ell-3)an< x(j)< 1+(h-1)(2\ell-1)an$ for $j\leq t$}, \\
  &~\mbox{~~~~$n+h-1-(h-1)(2\ell-1)an< x(j)< n-(h-1)(2\ell-3)an$ for $j\geq t+1$}\big\}
\end{align*}
for $\ell=2, \ldots, k$.
Note that the sizes of $|A_1|, |A_2|, \ldots, |A_k|$ are $(1-o(1))(an)^d$.
We now check that these $2k-1$ subsets are pairwise disjoint.
By the definition, the subsets $A_1, A_2, \ldots, A_k$ are pairwise disjoint, and the subsets $B_2, \ldots, B_k$ are pairwise disjoint.
Moreover, since $a= \frac{1}{2h(2k-1)}$, we have
$n-(2k-1)an>1+(h-1)(2k-1)an$ for the $j$th coordinate with $j\leq t$,
and $(2k-1)an<n+h-1-(h-1)(2k-1)an$ for the $j$th coordinate with $j\geq t+1$.
Thus $A_{\ell_1}$ and $B_{\ell_2}$ are disjoint for any $1\leq \ell_1\leq k$ and $2\leq \ell_2\leq k$.

We shall choose $\{x_{\ell, 2}, \ldots, x_{\ell, h}\}$ from $A_{\ell}$ for each $1\leq \ell \leq k$,
and show that the point $x_{\ell, 1}$ must be contained in $B_{\ell}$ for each $2\leq \ell \leq k$.
For each $1\leq \ell \leq k$, we fix a set $\{x_{\ell, 2}, \ldots, x_{\ell, h}\}$ of points of $A_{\ell}$ arbitrarily.
Let $s_1\colonequals v+x_{1,2}+\cdots +x_{1,h}$, and for each $2\leq \ell \leq k$, let $s_{\ell}\colonequals x_{\ell, 2}+ \cdots+ x_{\ell, h}$ and $x_{\ell, 1}\colonequals s_1-s_{\ell}$.
Then for $j\leq t$, the $j$th coordinates of $s_1$ and $s_{\ell}$ satisfy
$$1+(h-1)(n-an)< s_1(j) \leq 1+(h-1)n$$
and
$$(h-1)(n-(2\ell-1)an)< s_{\ell}(j) \leq (h-1)(n-(2\ell-2)an)$$
for $2\leq \ell \leq k$,
so
$$x_{\ell, 1}(j)>1+(h-1)(n-an)-(h-1)(n-(2\ell-2)an)= 1+(h-1)(2\ell-3)an$$
and
$$x_{\ell, 1}(j)<1+(h-1)n-(h-1)(n-(2\ell-1)an)= 1+(h-1)(2\ell-1)an.$$
For $j\geq t+1$, we have
$$n+h-1\leq s_1(j) < n+(h-1)an$$
and
$$(h-1)(2\ell-2)an \leq s_{\ell}(j)< (h-1)(2\ell-1)an$$
for $2\leq \ell \leq k$,
so
$$x_{\ell, 1}(j)>n+h-1-(h-1)(2\ell-1)an$$
and
$$x_{\ell, 1}(j)<n+(h-1)an-(h-1)(2\ell-2)an= n-(h-1)(2\ell-3)an.$$
Hence, we have $x_{\ell, 1}\in B_{\ell}$ for each $2\leq \ell \leq k$.

Therefore, $$f_{[n]^d}(v)\geq \prod_{\ell=1}^{k}{|A_{\ell}|\choose h-1}\geq (1-o(1)){(an)^d \choose h-1}^k\geq (1-o(1))\frac{a^{dk(h-1)}}{(h-1)^{k(h-1)}}n^{dk(h-1)}.$$
This completes the proof of Lemma~\ref{le:fAv}.
\end{proof}

\begin{corollary}\label{cor:fAv}
Let $d, k, h, n$ be integers with $d\geq 1$, $k\geq 2$, $h\geq 2$ and $n$ sufficiently large, and let $A\subseteq [n]^d$ be subset with $|A|=n^d-o(n^d)$.
Then there exists a constant $c_{\ref{cor:fAv}}=c_{\ref{cor:fAv}}(d,k,h)>0$ such that
for any point $v\in A$, we have
$$f_{A}(v)\geq c_{\ref{cor:fAv}}n^{dk(h-1)}.$$
\end{corollary}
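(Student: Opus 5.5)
The plan is to reduce to Lemma~\ref{le:fAv} in two steps. First, put an arbitrary point $v$ at a corner of a large sub-box of $[n]^d$ and count inside that box. Second, discard the configurations that use points of $[n]^d\setminus A$; there are only $o(n^{dk(h-1)})$ of these.

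\textbf{Step 1: the sub-box.} Let $m\colonequals \lfloor n/2\rfloor$. Fix $v\in A$. For each coordinate $j\in[d]$:
\begin{itemize}
\item if $v(j)\leq n/2$, set $I_j\colonequals \{v(j),v(j)+1,\ldots,v(j)+m-1\}$;
\item otherwise set $I_j\colonequals \{v(j)-m+1,\ldots,v(j)\}$.
\end{itemize}
In either case $I_j\subseteq[n]$ and $v(j)$ is an endpoint of $I_j$. The box $Q\colonequals I_1\times\cdots\times I_d$ is a translate $u+[m]^d$ of the grid $[m]^d$, and $v-u$ is a corner point of $[m]^d$.

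Each group in the $B_{k,h}$-equation has exactly $h$ summands. Hence translating all $kh$ points by the same vector $u$ adds $hu$ to every group sum, so it maps solutions to solutions. Consequently $f_{Q}(v)=f_{[m]^d}(v-u)$. Since $m$ is large when $n$ is, Lemma~\ref{le:fAv} applies and gives
$$f_{[n]^d}(v)\geq f_Q(v)\geq c_{\ref{le:fAv}}\,m^{dk(h-1)}\geq c_{\ref{le:fAv}}2^{-dk(h-1)-1}n^{dk(h-1)}.$$

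\textbf{Step 2: removing points outside $A$.} Let $D\colonequals [n]^d\setminus A$, so $|D|=o(n^d)$. We bound the number of sets counted by $f_{[n]^d}(v)$ that contain some point $y\in D$.
\begin{itemize}
\item Choose $y\in D$ and one of the $kh-1$ positions (all except $x_{1,1}=v$) for it: at most $kh\,|D|$ ways.
\item The configuration has $kh-1$ unknown points subject to $k-1$ independent vector equations. Once $y$ is placed, we may treat as determined one point in each group $\ell\geq 2$ other than the one occupied by $y$. If $y$ lies in group $\ell\geq2$, swap which point of that group is determined; this is the same trick as in Claim~\ref{cl:fdkh}.
\item The remaining free points number $k(h-1)-1$, giving at most $n^{d(k(h-1)-1)}$ choices.
\end{itemize}
So these bad configurations number at most
$$kh\,|D|\,n^{d(k(h-1)-1)}=o\big(n^{dk(h-1)}\big).$$

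\textbf{Conclusion.} Every set counted by $f_{[n]^d}(v)$ that avoids $D$ lies in $A\setminus\{v\}$ and is therefore counted by $f_A(v)$. Hence
$$f_A(v)\geq f_{[n]^d}(v)-o\big(n^{dk(h-1)}\big)\geq c_{\ref{cor:fAv}}\,n^{dk(h-1)},$$
for instance with $c_{\ref{cor:fAv}}\colonequals c_{\ref{le:fAv}}2^{-dk(h-1)-2}$.

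The only delicate point is the bookkeeping in Step 2: fixing one point must cost a full factor $n^d$ wherever $y$ sits. Choosing the determined points of each group away from $y$'s position guarantees this.
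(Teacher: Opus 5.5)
Your proposal is correct and follows essentially the paper's route: you take a sub-box of side $\lfloor n/2\rfloor$ with $v$ as a corner, apply Lemma~\ref{le:fAv} via translation, and subtract the $o(n^{dk(h-1)})$ configurations that meet $[n]^d\setminus A$. Your Step~2 bookkeeping (counting the positions for $y$, and choosing the determined point of each group away from $y$) is slightly more careful than the paper's ``say $x_{1,2}$'' shortcut.
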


\begin{proof}
Without loss of generality, we may assume that $v=(v(1), \ldots, v(d))$ and for certain $0\leq t\leq d$, we have $v(j)\leq \left\lfloor\frac{n}{2}\right\rfloor$ if $j\leq t$, and $v(j)\geq \left\lfloor\frac{n}{2}\right\rfloor+1$ if $j\geq t+1$.
Let
\begin{align*}
  F\colonequals \big\{x=(x(1), \ldots, x(d))\in [n]^d\colon\, &~\mbox{$v(j)\leq x(j)\leq v(j)+\left\lfloor\frac{n}{2}\right\rfloor-1$ for $j\leq t$}, \\
  &~\mbox{$v(j)-\left\lfloor\frac{n}{2}\right\rfloor+1\leq x(j)\leq v(j)$ for $j\geq t+1$}\big\}.
\end{align*}
Then $F$ is isomorphic to the grid $\left[\left\lfloor\frac{n}{2}\right\rfloor\right]^{d}$
in which $v$ corresponds to the corner point $v'\in \left[\left\lfloor\frac{n}{2}\right\rfloor\right]^{d}$ with $v'(j)=1$ for $j\leq t$ and $v'(j)=\left\lfloor\frac{n}{2}\right\rfloor$ for $j\geq t+1$.
By Lemma~\ref{le:fAv}, there exists a constant $c_{\ref{le:fAv}}>0$ such that $f_F(v)=f_{\left[\left\lfloor\frac{n}{2}\right\rfloor\right]^{d}}(v')\geq c_{\ref{le:fAv}}\left\lfloor\frac{n}{2}\right\rfloor^{dk(h-1)}\geq \frac{c_{\ref{le:fAv}}}{2^{dkh}}n^{dk(h-1)}$.

We next consider solutions to $v+\sum_{i=2}^{h}x_{1,i}=\sum_{i=1}^{h}x_{2,i}=\cdots =\sum_{i=1}^{h}x_{k,i}$ with a point in $[n]^d\setminus A$.
Since $|A|=n^d-o(n^d)$, we have $|[n]^d\setminus A|=o(n^d)$.
Fixing a point in $[n]^d\setminus A$, we set it as one point in $\{x_{1,i}\colon\, 2\leq i\leq h\}\cup \{x_{\ell,i}\colon\, 2\leq \ell\leq k, 1\leq i\leq h\}$, say $x_{1,2}$.
Then there are at most $|A|^{h-2+(h-1)(k-1)}$ choices of $\{x_{1,i}\colon\, 3\leq i\leq h\}\cup \{x_{\ell,i}\colon\, 2\leq \ell\leq k, 1\leq i\leq h-1\}$,
and after choosing these points, there is at most one choice of $x_{\ell,h}$ satisfying $v+\sum_{i=2}^{h}x_{1,i}=\sum_{i=1}^{h}x_{\ell,i}$ for each $2\leq \ell\leq k$.
Hence, there are at most $o(n^d)|A|^{h-2+(h-1)(k-1)}$ solutions with a point in $[n]^d\setminus A$.
Therefore, we have $$f_{A}(v)\geq f_F(v)-o(n^d)|A|^{h-2+(h-1)(k-1)}\geq \frac{c_{\ref{le:fAv}}}{2^{dkh}}n^{dk(h-1)}-o(n^{dk(h-1)})= (1-o(1))\frac{c_{\ref{le:fAv}}}{2^{dkh}}n^{dk(h-1)}.$$
The proof of Corollary~\ref{cor:fAv} is complete.
\end{proof}

\subsection{Container theorems}
\label{subsec:container}

In this section, we first introduce the Hypergraph Container Method which was developed by Balogh-Morris-Samotij~\cite{BaMS} and Saxton-Thomason~\cite{SaTh} independently in 2015.
Then we state and prove a container theorem with respect to forbidden rainbow solutions to the $B_{k,h}$-equation.
The hypergraph container method is a powerful tool in combinatorics and has applications in various fields including
additive combinatorics, extremal graph theory, Ramsey theory, number theory and discrete geometry.
We first introduce some additional notation.

Let $\mathcal{H}$ be a $k$-uniform hypergraph with vertex set $V(H)$ and edge set $E(H)$.
Let $d(\mathcal{H})$ be the {\it average degree} of $\mathcal{H}$.
For any subset $U\subseteq V(\mathcal{H})$, let $\mathcal{H}[U]$ be the subhypergraph of $\mathcal{H}$ induced by $U$.
For any subset $U\subseteq V(\mathcal{H})$ with $2\leq |U|\leq k$, let $d_{\mathcal{H}}(U)\colonequals |\{e\in E(\mathcal{H})\colon\, U\subseteq e\}|$ be the {\it co-degree} of $U$.
For $2\leq j\leq k$, the {\it $j$th maximum co-degree} of $\mathcal{H}$ is $\Delta_j(\mathcal{H})\colonequals \max\{d_{\mathcal{H}}(U)\colon\, U\subseteq V(\mathcal{H}), |U|=j\}$.
For $0<\tau <1$, the {\it co-degree function} is defined as
$$\Delta(\mathcal{H},\tau)\colonequals 2^{\binom{k}{2}-1}\sum^{k}_{j=2}2^{-\binom{j-1}{2}} \frac{\Delta_j(\mathcal{H})}{d(\mathcal{H})\tau^{j-1}}.$$

We will use the following form of the hypergraph container theorem~\cite[Corollary~3.6]{SaTh} (see also \cite[Theorem~3.1]{BaSo}).

\begin{theorem}\label{th:HCT} {\normalfont (Hypergraph container theorem~\cite{SaTh})}
Let $\mathcal{H}$ be a $k$-uniform hypergraph on vertex set $[N]$.
Let $0< \varepsilon, \tau< 1/2$. Suppose that $\tau  < 1/(200\cdot k \cdot k!^2)$ and $\Delta(\mathcal{H}, \tau) \leq  \varepsilon /(12k!)$.
Then there exists a positive constant $c_{\ref{th:HCT}} = c_{\ref{th:HCT}}(k) \leq 1000\cdot k \cdot k!^3$ and a collection $\mathcal{C}$ of vertex subsets such that
\begin{itemize}
  \item[{\rm (i)}] every independent set in $\mathcal{H}$ is a subset of some $C\in \mathcal{C}$;

  \item[{\rm (ii)}] for every $C \in \mathcal{C}$, $|E(\mathcal{H} [C])| \leq  \varepsilon \cdot |E(\mathcal{H})|$;

  \item[{\rm (iii)}] $\log |\mathcal{C}| \leq c_{\ref{th:HCT}}N\tau \cdot \log(1/\varepsilon) \cdot \log(1/\tau)$.
\end{itemize}
\end{theorem}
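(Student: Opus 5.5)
Theorem~\ref{th:HCT} is a restatement of \cite[Corollary~3.6]{SaTh}, and we would not reprove the container machinery from scratch. The plan is to derive it from the single-step container lemma of Saxton and Thomason by iteration, checking that the constants match the stated form.

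\textbf{The single-step lemma.} Let $\mathcal{G}$ be a $k$-uniform hypergraph on $[N]$ and $0<\tau<1/(200 k\, k!^2)$ with $\Delta(\mathcal{G},\tau)\le \varepsilon/(12k!)$. The lemma gives a map $T\mapsto C(T)$ and, for every independent set $I$, a \emph{fingerprint} $T=T(I)\subseteq I$ with $|T|\le c(k)\tau N$ and $I\subseteq C(T)$. Each container $C(T)$ either satisfies $|E(\mathcal{G}[C(T)])|\le \varepsilon |E(\mathcal{G})|$ or has $|C(T)|\le (1-c'(k))N$ for some $c'(k)>0$. This is proved by the degree-measure ``scythe'' algorithm: one repeatedly selects the vertex of maximum degree among the remaining candidates and uses the co-degree bounds $\Delta_j(\mathcal{G})/(d(\mathcal{G})\tau^{j-1})$ to control how many edges are lost at each level $j=2,\dots,k$. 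We take this lemma as given from \cite{SaTh}.

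\textbf{Iteration.} Start with $\mathcal{G}_0=\mathcal{H}$. Given a current container $C$ with $|E(\mathcal{H}[C])|>\varepsilon|E(\mathcal{H})|$, apply the lemma to $\mathcal{H}[C]$ with the independent set $I\subseteq C$. Each pass shrinks the container by a constant factor, so the process stops after $O_k(\log(1/\varepsilon))$ rounds. It must stop by then because $|E(\mathcal{H}[C])|\le |C|^k$ relative to $|E(\mathcal{H})|$ can only exceed $\varepsilon$ while $|C|$ is not too small; more precisely, this uses the measure version of the argument. Each final container is determined by the ordered sequence of fingerprints $(T_1,\dots,T_s)$. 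Hence
\[
|\mathcal{C}|\le \Bigl(\sum_{i\le c\tau N}\binom{N}{i}\Bigr)^{O(\log(1/\varepsilon))}\le 2^{O_k(N\tau\log(1/\tau)\log(1/\varepsilon))},
\]
which gives (iii). Properties (i) and (ii) hold by construction.

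\textbf{Main obstacle.} The hypothesis must survive passing to $\mathcal{H}[C]$. The maximum co-degrees $\Delta_j$ can only decrease, but the average degree $d(\mathcal{H}[C])$ may drop. While the process is still running we have $|E(\mathcal{H}[C])|>\varepsilon|E(\mathcal{H})|$, so $d(\mathcal{H}[C])\ge \varepsilon\, d(\mathcal{H})N/|C|$, and the co-degree function can grow by a factor of order $1/\varepsilon$. Saxton and Thomason handle this by working with the degree measure of the original hypergraph $\mathcal{H}$ throughout, rather than re-normalizing on $\mathcal{H}[C]$. That is exactly why the hypothesis is stated as $\Delta(\mathcal{H},\tau)\le\varepsilon/(12k!)$ with $\varepsilon$ appearing explicitly, and we would follow their Corollary~3.6 verbatim at this step. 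The explicit bound $c_{\ref{th:HCT}}\le 1000\cdot k\cdot k!^3$ then comes from tracking their constants.
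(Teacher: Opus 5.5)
The paper gives no proof of Theorem~\ref{th:HCT}. It quotes the result from \cite[Corollary~3.6]{SaTh}, in the max co-degree form of \cite[Theorem~3.1]{BaSo}, and uses it as a black box. Your opening sentence therefore already matches what the paper does. The iteration you sketch on top of the citation, however, would not go through as written.

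\textbf{The progress measure.} The one-step lemma of Saxton and Thomason does not shrink the \emph{cardinality} of the container. It shrinks its degree measure with respect to the \emph{current} hypergraph $\mathcal{G}$: $\mu_{\mathcal{G}}(C(T))\le 1-c(k)$, where $\mu_{\mathcal{G}}(S)\colonequals\sum_{v\in S}\deg_{\mathcal{G}}(v)/(k|E(\mathcal{G})|)$. Each edge inside $S$ is counted $k$ times in $\sum_{v\in S}\deg_{\mathcal{G}}(v)$, so this gives $|E(\mathcal{G}[C(T)])|\le(1-c)|E(\mathcal{G})|$. Edge counts therefore decay geometrically, and $O_k(\log(1/\varepsilon))$ rounds suffice.

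Cardinality cannot be the quantity that drops. Suppose all edges lie inside a set of $\delta N$ vertices with $\delta<c'$. Then every container of an independent set containing the other $(1-\delta)N$ isolated vertices has more than $(1-c')N$ vertices. Your dichotomy would then force $\varepsilon$-sparse containers in a single step, which is not what \cite{SaTh} provides.

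Even granting cardinality shrinkage, the bound $|E(\mathcal{H}[C])|\le|C|^k$ only forces termination after roughly $\log(N^k/(\varepsilon|E(\mathcal{H})|))$ rounds. Take the $(kh)$-uniform hypergraph in the proof of Theorem~\ref{th:BkhCT}. There $N^{kh}/|E(\mathcal{H})|$ is $n^{d(k-1)}$ up to polylogarithmic factors, so you would need $\Theta(\log n)$ rounds instead of $O(\log(1/\varepsilon))=O(\log\log n)$, and (iii) would be lost.

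\textbf{The renormalization.} Your ``main obstacle'' computation is correct, but the conclusion you draw from it is backwards. Saxton and Thomason \emph{do} re-normalize: in round $i$ they apply the one-step lemma to $\mathcal{G}_i=\mathcal{H}[C_i]$ with its own degree measure. That lemma only needs the co-degree function of the current hypergraph to be bounded by a constant depending on $k$ alone. As long as $|E(\mathcal{G}_i)|>\varepsilon|E(\mathcal{H})|$, your inequality gives $\Delta(\mathcal{G}_i,\tau)\le\Delta(\mathcal{H},\tau)/\varepsilon\le 1/(12k!)$ in every round. This is the actual role of $\varepsilon$ in the hypothesis.

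Keeping the original measure $\mu_{\mathcal{H}}$ throughout cannot work. An independent set can carry $\mu_{\mathcal{H}}$-measure $1/2$ (one side of $K_{N/2,N/2}$), so containers cannot be pushed to small $\mu_{\mathcal{H}}$-measure. What must decrease is $|E(\mathcal{H}[C])|$, and that is controlled by the measure of the current induced hypergraph.

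With these two corrections your outline is the Saxton--Thomason argument: fingerprints of size at most $c\tau N$, $O_k(\log(1/\varepsilon))$ rounds, and containers indexed by fingerprint sequences.
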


In order to prove our main result, we shall prove a container theorem with respect to forbidden rainbow solutions to the $B_{k,h}$-equation.
Before stating and proving our container theorem, we first introduce the concept of {\it templates}, which was first proposed by Falgas-Ravry, O'Connell and Uzzell~\cite{FaOU}.

\begin{definition}\label{de:template} {\normalfont (Template, palette, subtemplate, rainbow solution)}
{\rm Let $A\subseteq [n]^d$ and $r$ be a positive integer.
\begin{itemize}
\item[(1)] An {\it $r$-template} of $A$ is a function $P\colon\, A \to  2^{[r]}$, associating to each point $x$ in $A$ a list of colors $P(x) \subseteq [r]$. We refer to this set $P(x)$ as the {\it palette} available at $x$.
\item[(2)] Let $P_1, P_2$ be two $r$-templates of $A$. We say that $P_1$ is a {\it subtemplate} of $P_2$ (written as $P_1 \subseteq P_2$) if $P_1(x) \subseteq P_2(x)$ for every point $x\in A$.
\item[(3)] For an $r$-template $P$ of $A$, we say that $P$ is a {\it rainbow solution} to the $B_{k,h}$-equation if there exist $kh$ points $x_{\ell,i}$ ($\ell \in [k]$, $i\in [h]$) in $A$ with $\sum_{i=1}^{h}x_{1,i}=\sum_{i=1}^{h}x_{2,i}=\cdots =\sum_{i=1}^{h}x_{k,i}$ such that
    $|P(x_{\ell,i})|=1$ for every $\ell\in [k]$ and $i\in [h]$,
    $P(x)=\emptyset$ for every $x\in A\setminus \{x_{\ell,i}\colon\, 1\leq \ell\leq k, 1\leq i\leq h\}$, and
    the palettes $P(x_{\ell,i})$ ($\ell \in [k]$, $i\in [h]$) are pairwise distinct.
\item[(4)] For an $r$-template $P$, we say that $P$ contains no rainbow solutions to the $B_{k,h}$-equation if it contains no subtemplate that is a rainbow solution to the $B_{k,h}$-equation.
\end{itemize}}
\end{definition}

For an $r$-template $P$, we define $R(P)$ as the number of subtemplates of $P$ that are rainbow solutions to the $B_{k,h}$-equation.
Note that for any $A'\subseteq A \subseteq [n]^d$, an $r$-coloring of $A'$ can be viewed as an $r$-template $P$ of $A$ with $|P(x)|=1$ for each $x\in A'$ and $P(x)=\emptyset$ for each $x\in A\setminus A'$.
For a collection $\mathcal{P}$ of $r$-templates of $A$, we use $g(\mathcal{P}, A)$ to denote the number of $r$-colorings of $A$ that are subtemplates of some $P\in \mathcal{P}$ and contain no rainbow solutions to the $B_{k,h}$-equation.
If $\mathcal{P}$ consists of a single template $P$, i.e., $\mathcal{P}=\{P\}$, then we simply write $g(\mathcal{P}, A)$ as $g(P, A)$.

Now we have all the ingredients to present our container theorem for templates without rainbow solutions to the $B_{k,h}$-equation.

\begin{theorem}\label{th:BkhCT}
For integers $d, k, h, r, n$ and subset $A\subseteq [n]^d$ with $d\geq 1$, $k\geq 2$, $h\geq 2$, $r\geq kh$, $n$ sufficiently large and $|A|\geq \frac{n^d}{\log n}$, there exists a collection $\mathcal{C}$ of $r$-templates of $A$ such that
\begin{itemize}
  \item[{\rm (i)}] every $r$-template of $A$ without rainbow solutions to the $B_{k,h}$-equation is a subtemplate of some $P\in \mathcal{C}$;

  \item[{\rm (ii)}] for every $P \in \mathcal{C}$, $R(P) \leq \frac{1}{(\log n)^{5kh}}|A|^{k(h-1)+1}$;

  \item[{\rm (iii)}] $|\mathcal{C}| \leq 2^{|A|n^{-\frac{k(h-1)}{kh-1}d}(\log n)^{10}}$.
\end{itemize}
\end{theorem}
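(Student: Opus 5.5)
We will use the standard "colored hypergraph" encoding of templates. Let $N\colonequals |A|\cdot r$ and let $\mathcal{H}$ be the $kh$-uniform hypergraph on vertex set $A\times[r]$. Its edges are the sets $\{(x_{\ell,i},c_{\ell,i})\colon \ell\in[k],i\in[h]\}$ such that:
\begin{itemize}
\item the points $\{x_{\ell,i}\}$ form a $kh$-set counted by $f_{d,k,h}(A)$, i.e.\ a solution with pairwise distinct points;
\item the colors $c_{\ell,i}$ are pairwise distinct.
\end{itemize}
An $r$-template $P$ corresponds to the vertex set $\{(x,c)\colon c\in P(x)\}$. Under this correspondence, $P$ contains no rainbow solution if and only if this set is independent in $\mathcal{H}$, and $R(P)=|E(\mathcal{H}[P])|$. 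Thus (i) follows directly from Theorem~\ref{th:HCT}(i), once we convert each container $C$ back into the template $x\mapsto\{c\colon (x,c)\in C\}$. For (ii), Theorem~\ref{th:HCT}(ii) gives $R(P)\le \varepsilon |E(\mathcal{H})|\le \varepsilon\, r^{kh}|A|^{k(h-1)+1}$ by the upper bound in Lemma~\ref{le:fdkh}. So we choose $\varepsilon\colonequals r^{-kh}(\log n)^{-5kh}$, which gives $\log(1/\varepsilon)=O(\log\log n)$.

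It remains to check the codegree condition with a suitable $\tau$. By Lemma~\ref{le:fdkh}, which applies since $|A|\ge n^d/\log n\gg n^{\frac{k-1}{k}d}\log n$, we have
$$|E(\mathcal{H})|\ge c\,\frac{|A|^{kh}}{n^{d(k-1)}}\quad\text{up to } r\text{-dependent constants},$$
so the average degree is $d(\mathcal{H})=\Omega\big(|A|^{kh-1}/n^{d(k-1)}\big)$. For the codegrees, fix $j$ vertices of $\mathcal{H}$, hence $j$ points. An edge is determined by its colors (constantly many choices) together with a choice of $k(h-1)+1$ "free" points; the remaining $k-1$ points are then forced by the equation, as in the proof of the upper bound of Lemma~\ref{le:fdkh}. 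We order the free points so that the fixed ones are placed among the free coordinates whenever possible. This gives, for $2\le j\le k(h-1)+1$,
$$\Delta_j(\mathcal{H})=O\big(|A|^{k(h-1)+1-j}\big),$$
and $\Delta_j=O(1)$ for larger $j$. Hence
$$\frac{\Delta_j}{d(\mathcal{H})\tau^{j-1}}=O\!\left(\frac{n^{d(k-1)}}{|A|^{k-1}}\cdot\frac{1}{(|A|\tau)^{j-1}}\right)\quad\text{for } j\le k(h-1)+1,$$
and for $j>k(h-1)+1$ the term is $O\big(n^{d(k-1)}/(|A|^{kh-1}\tau^{j-1})\big)$. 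Since $|A|\ge n^d/\log n$, the prefactor $n^{d(k-1)}/|A|^{k-1}$ is only polylogarithmic, so small $j$ causes no trouble.

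The binding constraint is $j=kh$. We need $\tau^{kh-1}\gg n^{d(k-1)}/|A|^{kh-1}$, i.e.
$$\tau\gtrsim |A|^{-1}n^{\frac{d(k-1)}{kh-1}}=|A|^{-1}n^{d}\,n^{-\frac{k(h-1)}{kh-1}d}.$$
We therefore set
$$\tau\colonequals \frac{n^{d}}{|A|}\,n^{-\frac{k(h-1)}{kh-1}d}(\log n)^{4}.$$
This beats every term by a polylog factor, so $\Delta(\mathcal{H},\tau)\le \varepsilon/(12(kh)!)$, because $\varepsilon$ is only $(\log n)^{-O(1)}$. We then check the intermediate $j$ as well: there $|A|\tau\ge n^{\frac{d(k-1)}{kh-1}}$ is a power of $n$, which kills the polylog prefactor. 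Also $\tau<1/(200\cdots)$ holds for large $n$.

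Theorem~\ref{th:HCT}(iii) then gives
$$\log|\mathcal{C}|\le c\,r|A|\tau\log(1/\varepsilon)\log(1/\tau)=O\big(n^{d}n^{-\frac{k(h-1)}{kh-1}d}(\log n)^{4}\log\log n\cdot\log n\big).$$
Since $|A|\ge n^d/\log n$, we have $n^d\le |A|\log n$, so this is at most $|A|n^{-\frac{k(h-1)}{kh-1}d}(\log n)^{10}$, which is (iii).

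The main obstacle is the uniform codegree bound $\Delta_j=O(|A|^{k(h-1)+1-j})$. The $j$ fixed points may sit in the same group or in different groups, and we must argue that some choice of $k-1$ "dependent" positions (one per group $\ell\ge2$, or one in group $1$ instead) avoids them. When $j\le k(h-1)+1$, we pick the dependent positions among the unfixed ones. This is possible since each group has $h\ge2$ positions, and if a group is entirely fixed its sum is determined anyway, which only helps. The $j=kh$ case, together with bookkeeping of the polylog factors, then determines the exponent.
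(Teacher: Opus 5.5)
Your route is the paper's: the same $kh$-uniform hypergraph on $A\times[r]$, Theorem~\ref{th:HCT} with $\varepsilon\asymp(\log n)^{-5kh}$, and $\tau\approx n^{-\frac{k(h-1)}{kh-1}d}$ up to polylogarithmic factors. However, two steps fail as written.

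\textbf{The codegree bound is false for $j\ge 2h$.} Your bound $\Delta_j(\mathcal{H})=O(|A|^{k(h-1)+1-j})$ does not hold there, and the justification that a fully fixed group ``only helps'' is backwards. Suppose $t\ge1$ groups lie entirely inside the fixed $j$-set. Then $t-1$ of the $k-1$ equations are mere consistency conditions among fixed points. So only $k-t$ points are forced, and the number of completions is of order $|A|^{kh-j-(k-t)}$. Hence $\Delta_j\asymp|A|^{kh-j-(k-\lfloor j/h\rfloor)}$, which is exactly what Claim~\ref{cl:deltaj} records.

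Concretely, take $k=3$, $h=2$, $j=4$, and fix distinctly colored points $a,b,y,z\in[n]^d$ with $a+b=y+z=(n+1,\ldots,n+1)$. Every pair $\{u,w\}$ with $u+w=a+b$, colored with two of the remaining colors, completes an edge. So $\Delta_4(\mathcal{H})=\Omega(n^d)$, not $O(1)$. The same failure occurs in your range $j>k(h-1)+1$, e.g.\ $h=2$, $k\ge4$, $j=2k-2$, where you claim $O(1)$ but fixing $k-1$ full pairs of equal sum leaves $\Theta(n^d)$ completions.

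Consequently, for intermediate $j$ the prefactor is not polylogarithmic. Up to polylogs, the $j$-th term is about $n^{d(\lfloor j/h\rfloor-1)}/(|A|\tau)^{j-1}$ with $|A|\tau\approx n^{\frac{k-1}{kh-1}d}$. Writing $m=\lfloor j/h\rfloor$, you must verify $(m-1)(kh-1)<(k-1)(j-1)$. Using $j\ge mh$, this reduces to $m<k$, which holds because $j\le kh-1$. This is the computation the paper carries out, and your sketch replaces it with an argument based on the false bound.

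\textbf{The power of $\log n$ in $\tau$ is too small.} Your choice $\tau=\frac{n^d}{|A|}n^{-\frac{k(h-1)}{kh-1}d}(\log n)^4$ gives exactly $|A|\tau=n^{\frac{k-1}{kh-1}d}(\log n)^4$. So for $A=[n]^d$ the $j=kh$ term of $\Delta(\mathcal{H},\tau)$ is $\Theta\big(n^{d(k-1)}/(|A|\tau)^{kh-1}\big)=\Theta\big((\log n)^{-4(kh-1)}\big)$. Meanwhile $\varepsilon/(12(kh)!)=\Theta\big((\log n)^{-5kh}\big)$. Since $4(kh-1)<5kh$, the hypothesis of Theorem~\ref{th:HCT} fails.

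Saying ``$\varepsilon$ is only $(\log n)^{-O(1)}$'' is not enough. At $j=kh$ the slack must beat $(\log n)^{5kh}$, which requires a factor $(\log n)^a$ in $\tau$ with $a(kh-1)>5kh$. Taking $a=7$ works because $kh\ge4$. Part (iii) survives, since then $\log|\mathcal{C}|=O\big(|A|n^{-\frac{k(h-1)}{kh-1}d}(\log n)^{9}\log\log n\big)$. The paper instead takes $\tau=n^{-\frac{k(h-1)}{kh-1}d}(\log n)^{8}$, so $|A|\tau\ge n^{\frac{k-1}{kh-1}d}(\log n)^{7}$ and the $j=kh$ term is $(\log n)^{-7kh+7}=o\big((\log n)^{-5kh}\big)$.

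Both gaps can be repaired without changing your strategy, but as written the codegree hypothesis $\Delta(\mathcal{H},\tau)\le\varepsilon/(12(kh)!)$ is not established.
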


\begin{proof} Let $\mathcal{H}$ be a $(kh)$-uniform hypergraph with vertex set $A \times [r]$,
whose edges are all subsets of $kh$ vertices of the form $\{(x_i, \alpha_i)\colon\, 1\leq i \leq kh\}$ such that
\begin{itemize}
\item $x_1, x_2, \ldots, x_{kh}$ are pairwise distinct points in $A$, and $\alpha_{1}, \alpha_{2}, \ldots, \alpha_{kh}$ are pairwise distinct colors in $[r]$;
\item there exists a way to partition the elements of $\{x_1, x_2, \ldots, x_{kh}\}$ into $k$ groups of $h$ elements each, and the sum of elements in each group is equal.
\end{itemize}
In other words, every edge in $\mathcal{H}$ corresponds to a rainbow solutions to the $B_{k,h}$-equation in $A$.
Note that every vertex subset of $\mathcal{H}$ corresponds to an $r$-template of $A$, and every independent set in $\mathcal{H}$ corresponds to an $r$-template of $A$ without rainbow solutions to the $B_{k,h}$-equation.
Hence, it suffices to show that for appropriate $\varepsilon$ and $\tau$, there exists a collection $\mathcal{C}$ of vertex subsets satisfying Theorem~\ref{th:HCT}~(i), (ii), (iii).
To achieve this, we set
$$\varepsilon\colonequals \frac{(r-kh)!}{r!}\frac{1}{(\log n)^{5kh}} \mbox{~~~and~~~} \tau\colonequals n^{-\frac{k(h-1)}{kh-1}d}(\log n)^{8},$$
and we shall check that $\Delta(\mathcal{H}, \tau) \leq  \varepsilon /(12\cdot (kh)!)$ in the following arguments.

By Lemma~\ref{le:fdkh} and since there are exactly $r(r - 1)\cdots (r - kh+1)=\frac{r!}{(r-kh)!}$ ways to rainbow color $kh$ pairwise distinct points with $r$ colors, we have
$$|E(\mathcal{H})|=\frac{r!}{(r-kh)!}f_{d,k,h}(A)\geq \frac{r!}{(r-kh)!}c_{\ref{le:fdkh}}\frac{|A|^{kh}}{n^{d(k-1)}}.$$
Let $c\colonequals \frac{kh}{r}\frac{r!}{(r-kh)!}c_{\ref{le:fdkh}}$.
Then
\begin{equation}\label{eq:dH}
d(\mathcal{H})=\frac{kh|E(\mathcal{H})|}{|V(\mathcal{H})|}\geq\frac{kh\frac{r!}{(r-kh)!}c_{\ref{le:fdkh}}\frac{|A|^{kh}}{n^{d(k-1)}}}{r|A|}=c\frac{|A|^{kh-1}}{n^{d(k-1)}}.
\end{equation}
Next, we consider upper bounds on $\Delta_j(\mathcal{H})$ for $2\leq j\leq kh$.

\begin{claim}\label{cl:deltaj}
$\Delta_j(\mathcal{H})\leq
\left\{
   \begin{aligned}
    &r^{kh-j}|A|^{kh-j-(k-1)}, & & \mbox{for $2\leq j \leq h-1$},\\
    &2r^{kh-j}|A|^{kh-j-(k-\lfloor\frac{j}{h}\rfloor)}, & & \mbox{for $h\leq j\leq kh-1$},\\
    &1, & & \mbox{for $j=kh$}.
   \end{aligned}
   \right.$
\end{claim}

\begin{proof}
Note that $\Delta_{kh}(\mathcal{H})=1$.
In the following, we consider the case $2\leq j\leq kh-1$.
Let $B$ be a subset of $j$ points of $A$.
We first estimate the number of subsets $B'\in {A\setminus B \choose kh-j}$ such that $B\cup B'$ forms a set $\{x_{\ell,i}\colon\, 1\leq \ell\leq k, 1\leq i\leq h\}$ with $\sum_{i=1}^{h}x_{1,i}=\sum_{i=1}^{h}x_{2,i}=\cdots=\sum_{i=1}^{h}x_{k,i}$.
Let $t$ be the number of indices $\ell$ with $\{x_{\ell,i}\colon\, 1\leq i\leq h\}\subseteq B$.
Then $0\leq t\leq \lfloor\frac{j}{h}\rfloor$.

If $t=0$, then there are at most $|A|^{kh-j-(k-1)}$ choices of a subset of $kh-j-(k-1)$ points which together with $B$ plays the role of $\{x_{1,i}\colon\, 1\leq i\leq h\}\cup \{x_{\ell,i}\colon\, 2\leq \ell\leq k, 1\leq i\leq h-1\}$.
After choosing these points, there is at most one choice of $x_{\ell,h}$ satisfying $\sum_{i=1}^{h}x_{1,i}=\sum_{i=1}^{h}x_{\ell,i}$ for each $2\leq \ell\leq k$.
Thus there are at most $|A|^{kh-j-(k-1)}$ choices of a desired subset $B'$ when $t=0$.

If $t\geq 1$, then we may assume that $1, \ldots, t$ are these indices without loss of generality, i.e., $\bigcup_{\ell\in [t]}\{x_{\ell,i}\colon\, 1\leq i\leq h\}\subseteq B$.
Then there are at most $|A|^{kh-j-(k-t)}$ choices of a subset of $kh-j-(k-t)$ points which together with $B$ plays the role of $\{x_{\ell,i}\colon\, 1\leq \ell\leq t, 1\leq i\leq h\}\cup \{x_{\ell,i}\colon\, t+1\leq \ell\leq k, 1\leq i\leq h-1\}$.
After choosing these points, there is at most one choice of $x_{\ell,h}$ satisfying $\sum_{i=1}^{h}x_{1,i}=\sum_{i=1}^{h}x_{\ell,i}$ for each $t+1\leq \ell\leq k$.
Thus there are at most $|A|^{kh-j-(k-t)}$ choices of a desired subset $B'$ when $t\geq 1$.

Note that there are at most $r^{kh-j}$ ways to rainbow color $kh-j$ points.
If $2\leq j \leq h-1$, then $t=0$, and thus $\Delta_j(\mathcal{H})\leq r^{kh-j}|A|^{kh-j-(k-1)}$.
For $h\leq j \leq kh-1$, we have $\Delta_j(\mathcal{H})\leq r^{kh-j}\left(|A|^{kh-j-(k-1)}+\sum_{1\leq t\leq \lfloor\frac{j}{h}\rfloor}|A|^{kh-j-(k-t)}\right)$.
Since $\frac{|A|^{kh-j-(k-t)}}{|A|^{kh-j-(k-\lfloor\frac{j}{h}\rfloor)}}=o(1)$ for any $t\leq \lfloor\frac{j}{h}\rfloor-1$, we can further deduce that $\Delta_j(\mathcal{H})\leq 2r^{kh-j}|A|^{kh-j-(k-\lfloor\frac{j}{h}\rfloor)}$.
\end{proof}

\begin{claim}\label{cl:delta}
$\Delta(\mathcal{H}, \tau) \leq  \varepsilon /(12\cdot (kh)!)$.
\end{claim}

\begin{proof}
Recall that $|A|\geq \frac{n^d}{\log n}$, $\varepsilon= \frac{(r-kh)!}{r!}\frac{1}{(\log n)^{5kh}}$ and $\tau=n^{-\frac{k(h-1)}{kh-1}d}(\log n)^{8}$.
By Inequality~(\ref{eq:dH}), we have
\begin{align*}
  \Delta(\mathcal{H},\tau)= &~2^{\binom{kh}{2}-1}\sum^{kh}_{j=2}2^{-\binom{j-1}{2}} \frac{\Delta_j(\mathcal{H})}{d(\mathcal{H})\tau^{j-1}}\\
   \leq &~\sum^{kh}_{j=2}2^{\binom{kh}{2}-1-\binom{j-1}{2}}\frac{\Delta_j(\mathcal{H})}{c\frac{|A|^{kh-1}}{n^{d(k-1)}}\left(n^{-\frac{k(h-1)}{kh-1}d}(\log n)^{8}\right)^{j-1}}.
\end{align*}
For each $2\leq j \leq kh$, let $I_j\colonequals \frac{\Delta_j(\mathcal{H})}{\frac{|A|^{kh-1}}{n^{d(k-1)}}\left(n^{-\frac{k(h-1)}{kh-1}d}(\log n)^{8}\right)^{j-1}}.$
For $2\leq j \leq h-1$, we can deduce from Claim~\ref{cl:deltaj} that
\begin{align*}
  I_j \leq &~\frac{r^{kh-j}|A|^{kh-j-(k-1)}}{\frac{|A|^{kh-1}}{n^{d(k-1)}}\left(n^{-\frac{k(h-1)}{kh-1}d}(\log n)^{8}\right)^{j-1}}\\
  = &~r^{kh-j} n^{d(k-1)+\frac{k(h-1)}{kh-1}d(j-1)} |A|^{-(j-1)-(k-1)} (\log n)^{-8(j-1)}\\
  \leq &~r^{kh-j} n^{d(k-1)+\frac{k(h-1)}{kh-1}d(j-1)} \left(\frac{n^d}{\log n}\right)^{-(j-1)-(k-1)} (\log n)^{-8(j-1)}\\
  = &~r^{kh-j} n^{d(k-1)+\frac{k(h-1)}{kh-1}d(j-1)-d(j-1)-d(k-1)} (\log n)^{j-1+k-1-8(j-1)}\\
  = &~r^{kh-j} n^{-\frac{k-1}{kh-1}d(j-1)} (\log n)^{k-1-7(j-1)} = o\left((\log n)^{-5kh}\right).
\end{align*}
For $h\leq j\leq kh-1$, we can deduce from Claim~\ref{cl:deltaj} that
\begin{align*}
  I_j \leq &~\frac{2r^{kh-j}|A|^{kh-j-(k-\lfloor\frac{j}{h}\rfloor)}}{\frac{|A|^{kh-1}}{n^{d(k-1)}}\left(n^{-\frac{k(h-1)}{kh-1}d}(\log n)^{8}\right)^{j-1}}\\
  = &~2r^{kh-j} n^{d(k-1)+\frac{k(h-1)}{kh-1}d(j-1)} |A|^{-(j-1)-(k-\lfloor\frac{j}{h}\rfloor)} (\log n)^{-8(j-1)}\\
  \leq &~2r^{kh-j} n^{d(k-1)+\frac{k(h-1)}{kh-1}d(j-1)} \left(\frac{n^d}{\log n}\right)^{-(j-1)-(k-\lfloor\frac{j}{h}\rfloor)} (\log n)^{-8(j-1)}\\
  = &~2r^{kh-j} n^{d(k-1)+\frac{k(h-1)}{kh-1}d(j-1)-d(j-1)-d(k-\lfloor\frac{j}{h}\rfloor)} (\log n)^{(k-\lfloor\frac{j}{h}\rfloor)-7(j-1)}.
\end{align*}
Since
\begin{align*}
 ~ &~d(k-1)+\frac{k(h-1)}{kh-1}d(j-1)-d(j-1)-d\left(k-\left\lfloor\frac{j}{h}\right\rfloor\right)\\
 \leq &~\left(k-1-\frac{k(h-1)}{kh-1}+1-k\right)d+\left(\frac{k(h-1)}{kh-1}-1+\frac{1}{h}\right)dj\\
 = &~-\frac{k(h-1)}{kh-1}d+\frac{h-1}{(kh-1)h}dj \leq -\frac{k(h-1)}{kh-1}d+\frac{h-1}{(kh-1)h}d(kh-1) = -\frac{h-1}{(kh-1)h}d < 0,
\end{align*}
we have $I_j=o\left((\log n)^{-5kh}\right)$ for $h\leq j\leq kh-1$.
For $j= kh$, we can deduce from Claim~\ref{cl:deltaj} that
\begin{align*}
  I_j \leq &~\frac{1}{\frac{|A|^{kh-1}}{n^{d(k-1)}}\left(n^{-\frac{k(h-1)}{kh-1}d}(\log n)^{8}\right)^{kh-1}}\\
  = &~n^{d(k-1)+\frac{k(h-1)}{kh-1}d(kh-1)} |A|^{1-kh} (\log n)^{-8(kh-1)}\\
  \leq &~n^{d(k-1)+\frac{k(h-1)}{kh-1}d(kh-1)} \left(\frac{n^d}{\log n}\right)^{1-kh} (\log n)^{-8(kh-1)}\\
  = &~n^{d(k-1)+\frac{k(h-1)}{kh-1}d(kh-1)+d(1-kh)} (\log n)^{kh-1-8(kh-1)}\\
  = &~(\log n)^{-7kh+7} = o\left((\log n)^{-5kh}\right).
\end{align*}
Therefore, we have $\Delta(\mathcal{H},\tau)=o\left((\log n)^{-5kh}\right)$, and thus $\Delta(\mathcal{H}, \tau) \leq  \varepsilon /(12\cdot (kh)!)$.
\end{proof}

By Claim~\ref{cl:delta}, Theorem~\ref{th:HCT} holds.
In particular, by Theorem~\ref{th:HCT}~(ii) and Lemma~\ref{le:fdkh}, we have
\begin{align*}
  R(P) \leq &~\varepsilon \cdot |E(\mathcal{H})| = \varepsilon \cdot \frac{r!}{(r-kh)!}f_{d,k,h}(A)\\
  \leq &~\frac{(r-kh)!}{r!}\frac{1}{(\log n)^{5kh}} \cdot \frac{r!}{(r-kh)!}|A|^{k(h-1)+1} = \frac{1}{(\log n)^{5kh}}|A|^{k(h-1)+1}.
\end{align*}
By Theorem~\ref{th:HCT}~(iii), we have
$$|\mathcal{C}| \leq 2^{c_{\ref{th:HCT}}r|A|\tau \cdot \log(1/\varepsilon) \cdot \log(1/\tau)}\leq 2^{|A|n^{-\frac{k(h-1)}{kh-1}d}(\log n)^{10}}.$$
This completes the proof of Theorem~\ref{th:BkhCT}.
\end{proof}

\section{Stability analysis}
\label{sec:sta}

Throughout this section, we assume that the integers $d, h, r, n$ and the subset $A\subseteq [n]^d$ satisfy the assumption of Theorem~\ref{th:BkhCT}, and $\mathcal{C}$ is the collection of $r$-templates of $A$ given by Theorem~\ref{th:BkhCT}.
In other words, we have that $d\geq 1$, $k\geq 2$, $h\geq 2$, $r\geq kh$, $n$ sufficiently large, $|A|\geq \frac{n^d}{\log n}$, and Theorem~\ref{th:BkhCT}~(i), (ii), (iii) hold.
In this section, we show that $\mathcal{C}$ exhibits stability in the following sense: we can partition $\mathcal{C}$ into two parts $\mathcal{C}_{good}$ and $\mathcal{C}_{bad}$ such that
\begin{itemize}
\item[(1)] for any $r$-template $P\in \mathcal{C}_{good}$, there exists a set $C_P\in {[r]\choose kh-1}$ such that almost all points in $A$ have $C_P$ as their palettes (see Lemma~\ref{le:sta-CP});
\item[(2)] $g(\mathcal{C}_{bad}, A)$ is much smaller than the lower bound (\ref{eq:lower}) (see Lemma~\ref{le:sta-Cbad}).
\end{itemize}

For any $r$-template $P\in \mathcal{C}$ and integer $i\in [r]$, let
$$X_i(P)\colonequals \{x\in A\colon\, |P(x)|=i\},$$
$$X_{\leq kh-2}(P)\colonequals \{x\in A\colon\, |P(x)|\leq kh-2\}$$
and
$$X_{\geq kh}(P)\colonequals \{x\in A\colon\, |P(x)|\geq kh\}.$$

\begin{lemma}\label{le:sta-Xkh}
For any $r$-template $P\in \mathcal{C}$, we have $|X_{\geq kh}|\leq \frac{|A|}{(\log n)^4}.$
\end{lemma}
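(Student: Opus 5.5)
We argue by contradiction, combining the lower bound on solutions from Lemma~\ref{le:fdkh} with the upper bound $R(P)\leq \frac{1}{(\log n)^{5kh}}|A|^{k(h-1)+1}$ from Theorem~\ref{th:BkhCT}~(ii). Suppose that $X\colonequals X_{\geq kh}(P)$ satisfies $|X|>\frac{|A|}{(\log n)^4}$. Since $|A|\geq \frac{n^d}{\log n}$, we get $|X|\geq \frac{n^d}{(\log n)^5}$, which is much larger than $n^{\frac{k-1}{k}d}\log n$ once $n$ is large. Hence Lemma~\ref{le:fdkh} applies to $X$ and gives
$$f_{d,k,h}(X)\geq c_{\ref{le:fdkh}}\frac{|X|^{kh}}{n^{d(k-1)}}.$$

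The key step is to show that every solution set inside $X$ produces at least one rainbow subtemplate of $P$. Take a $kh$-set $\{x_{\ell,i}\}\subseteq X$ forming a solution. Every palette $P(x_{\ell,i})$ has at least $kh$ colors, so we can assign distinct colors greedily. Order the points arbitrarily; when we reach the $m$th point, at most $m-1\leq kh-1$ colors have been used, so a free color remains in its palette. This gives a system of distinct representatives, i.e.\ a rainbow solution subtemplate of $P$. Different $kh$-sets give different subtemplates, because a subtemplate determines its support. Therefore
$$R(P)\geq f_{d,k,h}(X)\geq c_{\ref{le:fdkh}}\frac{|X|^{kh}}{n^{d(k-1)}}.$$

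It remains to compare exponents. Using $|X|>|A|(\log n)^{-4}$, we get
$$R(P)> c_{\ref{le:fdkh}}(\log n)^{-4kh}\frac{|A|^{kh}}{n^{d(k-1)}} = c_{\ref{le:fdkh}}(\log n)^{-4kh}|A|^{k(h-1)+1}\cdot\frac{|A|^{k-1}}{n^{d(k-1)}}.$$
Since $|A|\geq n^d/\log n$, the last factor is at least $(\log n)^{-(k-1)}$. So $R(P)\geq c_{\ref{le:fdkh}}(\log n)^{-4kh-k+1}|A|^{k(h-1)+1}$. Because $4kh+k-1<5kh$ (as $k<kh$), this exceeds $\frac{1}{(\log n)^{5kh}}|A|^{k(h-1)+1}$ for large $n$. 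This contradicts Theorem~\ref{th:BkhCT}~(ii).

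The main point needing care is the bookkeeping of logarithmic powers, namely making sure the losses $(\log n)^{4kh}$ and $(\log n)^{k-1}$ together stay below $(\log n)^{5kh}$; the computation above confirms they do. The only other check is that $|X|$ is large enough for Lemma~\ref{le:fdkh} to apply, which we verified at the start.
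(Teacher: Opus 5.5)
Your proposal is correct and follows essentially the same route as the paper: apply Lemma~\ref{le:fdkh} to $X_{\geq kh}(P)$, use the greedy choice of distinct colors to get $R(P)\geq f_{d,k,h}(X_{\geq kh}(P))$, and compare $c_{\ref{le:fdkh}}(\log n)^{-(4kh+k-1)}|A|^{k(h-1)+1}$ with the bound in Theorem~\ref{th:BkhCT}~(ii). You also explicitly check that $|X_{\geq kh}(P)|$ exceeds the threshold $n^{\frac{k-1}{k}d}\log n$ required by Lemma~\ref{le:fdkh}, which the paper leaves implicit.
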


\begin{proof}
For a contradiction, suppose that $|X_{\geq kh}|> \frac{|A|}{(\log n)^4}.$
By Lemma~\ref{le:fdkh} and recalling that $|A|\geq \frac{n^d}{\log n}$, we have
\begin{align*}
f_{d,k,h}(X_{\geq kh})\geq &~c_{\ref{le:fdkh}}\frac{\left|X_{\geq kh}\right|^{kh}}{n^{d(k-1)}} > c_{\ref{le:fdkh}}\frac{\left(\frac{|A|}{(\log n)^4}\right)^{kh}}{n^{d(k-1)}} = c_{\ref{le:fdkh}}\frac{|A|^{kh}}{(\log n)^{4kh}n^{d(k-1)}}\\
\geq &~c_{\ref{le:fdkh}}\frac{|A|^{k(h-1)+1}}{(\log n)^{4kh}n^{d(k-1)}}\left(\frac{n^d}{\log n}\right)^{k-1} =c_{\ref{le:fdkh}}\frac{|A|^{k(h-1)+1}}{(\log n)^{4kh+k-1}} > \frac{1}{(\log n)^{5kh}}|A|^{k(h-1)+1}.
\end{align*}
Note that for any solution to $\sum_{i=1}^{h}x_{1,i}=\sum_{i=1}^{h}x_{2,i}=\cdots =\sum_{i=1}^{h}x_{k,i}$ with $\{x_{\ell,i}\colon\, 1\leq \ell\leq k, 1\leq i\leq h\}\in {X_{\geq kh} \choose kh}$, we have $|P(x_{\ell,i})|\geq kh$ for all $\ell\in [k]$ and $i\in [h]$.
Hence, we can greedily find $kh$ pairwise distinct colors
$\alpha_{\ell,i}\in P(x_{\ell,i})$ ($\ell \in [k]$, $i\in [h]$).
This implies that $R(P) \geq f_{d,k,h}(X_{\geq kh}) > \frac{1}{(\log n)^{5kh}}|A|^{k(h-1)+1}$, contradicting Theorem~\ref{th:BkhCT}~(ii).
\end{proof}

Let $\mathcal{C}_{good}\colonequals \left\{P\in \mathcal{C}\colon\, |X_{\leq kh-2}(P)|\leq \frac{|A|}{(\log n)^3}\right\}$ and $\mathcal{C}_{bad}\colonequals \mathcal{C}\setminus \mathcal{C}_{good}$.
Combining with Lemma~\ref{le:sta-Xkh}, we can obtain the following fact.

\begin{fact}\label{fact:sta-Xkh-1}
For any $r$-template $P\in \mathcal{C}_{good}$, we have $|X_{kh-1}(P)|\geq |A|-\frac{2|A|}{(\log n)^3}.$
\end{fact}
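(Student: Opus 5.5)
The fact follows by partitioning $A$ according to palette size. For every $P\in\mathcal{C}$ the three sets $X_{\leq kh-2}(P)$, $X_{kh-1}(P)$ and $X_{\geq kh}(P)$ are pairwise disjoint, because a point's palette size is either at most $kh-2$, exactly $kh-1$, or at least $kh$. Their union is all of $A$, so
$$|X_{kh-1}(P)| = |A| - |X_{\leq kh-2}(P)| - |X_{\geq kh}(P)|.$$

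Now take $P\in\mathcal{C}_{good}$ and bound the two subtracted terms.

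\emph{First term.} By the definition of $\mathcal{C}_{good}$, we have $|X_{\leq kh-2}(P)|\leq \frac{|A|}{(\log n)^3}$.

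\emph{Second term.} Since $\mathcal{C}_{good}\subseteq\mathcal{C}$, Lemma~\ref{le:sta-Xkh} applies and gives $|X_{\geq kh}(P)|\leq \frac{|A|}{(\log n)^4}$. For $n$ sufficiently large, $\log n\geq 1$, so this is at most $\frac{|A|}{(\log n)^3}$.

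Substituting both bounds,
$$|X_{kh-1}(P)|\geq |A|-\frac{|A|}{(\log n)^3}-\frac{|A|}{(\log n)^3} = |A|-\frac{2|A|}{(\log n)^3},$$
which is the claim.

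The only substantive input is Lemma~\ref{le:sta-Xkh}, which we have already proved; the remaining steps are bookkeeping.
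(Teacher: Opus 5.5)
Your proposal is correct and is exactly the paper's argument: $A$ splits into $X_{\leq kh-2}(P)$, $X_{kh-1}(P)$ and $X_{\geq kh}(P)$, with the first bounded by the definition of $\mathcal{C}_{good}$ and the second by Lemma~\ref{le:sta-Xkh}. The paper states the fact without writing out these bookkeeping steps, and your version supplies them correctly, including that empty palettes lie in $X_{\leq kh-2}(P)$ and that $(\log n)^{-4}\leq(\log n)^{-3}$.
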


In fact, we can further show that for any $r$-template $P\in \mathcal{C}_{good}$, there exists a set $C_P\in {[r]\choose kh-1}$ such that almost all points in $A$ have $C_P$ as their palettes.
For an $r$-template $P$ and a set $C\subseteq [r]$, let $A_{P,C}\colonequals \{x\in A\colon\, P(x)=C\}$.

\begin{lemma}\label{le:sta-CP}
For any $r$-template $P\in \mathcal{C}_{good}$, there exists a set $C_P\in {[r]\choose kh-1}$ with $|A_{P,C_P}|\geq |A|-\frac{|A|}{(\log n)^2}$.
\end{lemma}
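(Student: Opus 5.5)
The plan is to argue by contradiction. Since $P\in\mathcal{C}_{good}$, Fact~\ref{fact:sta-Xkh-1} gives $|X_{kh-1}(P)|\geq |A|-\frac{2|A|}{(\log n)^3}$. The points of $X_{kh-1}(P)$ are partitioned according to their palettes into at most $\binom{r}{kh-1}$ classes $A_{P,C}$ with $C\in\binom{[r]}{kh-1}$. Let $C_P$ be the palette of the largest class, so that $|A_{P,C_P}|\geq |X_{kh-1}(P)|/\binom{r}{kh-1}=\Omega(|A|)$. Now suppose $|A_{P,C_P}|<|A|-\frac{|A|}{(\log n)^2}$. Then the set $Y\colonequals X_{kh-1}(P)\setminus A_{P,C_P}$ satisfies $|Y|\geq \frac{|A|}{(\log n)^2}-\frac{2|A|}{(\log n)^3}\geq \frac{|A|}{2(\log n)^2}$. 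By pigeonhole, some class $A_{P,C'}$ with $C'\neq C_P$ has size at least $\frac{|A|}{2\binom{r}{kh-1}(\log n)^2}$.

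Set $A_1\colonequals A_{P,C'}$ and $A_2\colonequals A_{P,C_P}$. These are disjoint, and both have size at least $\frac{|A|}{(\log n)^{3}}\geq \frac{n^d}{(\log n)^4}\gg n^{\frac{k-1}{k}d}\log n$. We apply Lemma~\ref{le:fdkhj} with $j=1$. This yields at least
\[
c_{\ref{le:fdkhj}}\frac{|A_1|^{k}|A_2|^{k(h-1)}}{n^{d(k-1)}}\geq c\frac{|A|^{kh}}{(\log n)^{2k}n^{d(k-1)}}\geq c\frac{|A|^{k(h-1)+1}}{(\log n)^{2k+k-1}}
\]
solutions in which exactly $k$ points, one from each group, lie in $A_1$ and the remaining $k(h-1)$ lie in $A_2$. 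Here we used $|A|^{k-1}\geq (n^d/\log n)^{k-1}$, exactly as in Lemma~\ref{le:sta-Xkh}. This bound exceeds $\frac{1}{(\log n)^{5kh}}|A|^{k(h-1)+1}$.

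The main step is to show that each such solution yields a rainbow subtemplate of $P$, which contradicts Theorem~\ref{th:BkhCT}~(ii). Since $|C'|=|C_P|=kh-1$ and $C'\neq C_P$, there is a color $\alpha\in C'\setminus C_P$. We assign $\alpha$ to one of the $k$ points in $A_1$. The union $C_P\cup C'$ contains at least $kh$ colors, so we need a rainbow assignment: $k(h-1)$ distinct colors from $C_P$ on the points of $A_2$, and $k$ distinct colors from $C'$ on the points of $A_1$, with no color used twice. This is a bipartite matching problem. Hall's condition may fail if $|C_P\cap C'|$ is large, for instance if $C'=C_P-\beta+\alpha$: the $k(h-1)+(k-1)$ points other than the one colored $\alpha$ would then have to use colors from $C_P\setminus\{\}$ and $C'\setminus\{\alpha\}$, whose union is $C_P$ of size $kh-1$, which is enough.

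In general, Hall's condition holds. The set $C_P\cup(C'\setminus\{\alpha\})$ already provides $kh-1$ colors for the remaining $kh-1$ points. The $A_2$-points need only $k(h-1)\leq kh-1$ colors from $C_P$. The $A_1$-points need $k-1$ further colors from $C'\setminus\{\alpha\}$, and this set has size $kh-2\geq k-1$. Any union of the neighborhoods contains either all of $C_P$ or all of $C'\setminus\{\alpha\}$, and in the mixed case it contains all of $C_P\cup(C'\setminus\{\alpha\})$, which has size $kh-1$. So a greedy choice works: first color the $A_1$-points using $\alpha$ together with $k-1$ colors of $C'$ different from $\alpha$, preferring colors outside $C_P$, and then color the $A_2$-points from the leftover colors of $C_P$.

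The step to verify carefully is the counting: at least $k(h-1)$ colors of $C_P$ must remain. At most $k-1$ colors of $C_P$ are consumed by the $A_1$-points, which leaves $kh-1-(k-1)=k(h-1)$ colors. This is exactly enough. Hence $R(P)$ exceeds the bound in Theorem~\ref{th:BkhCT}~(ii), giving the desired contradiction.
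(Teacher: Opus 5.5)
Your proof is correct and is essentially the paper's argument. You find a second palette class $A_{P,C'}$ of size $|A|/(\log n)^{O(1)}$ and apply Lemma~\ref{le:fdkhj} to get more than $|A|^{k(h-1)+1}/(\log n)^{5kh}$ mixed solutions. Each is then rainbow-colored by giving one $C'$-point a color of $C'\setminus C_P$, the other $k-1$ such points further colors of $C'$, and the remaining points the at least $k(h-1)$ leftover colors of $C_P$, contradicting Theorem~\ref{th:BkhCT}~(ii).

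The differences are cosmetic. You pigeonhole on $X_{kh-1}(P)\setminus A_{P,C_P}$ directly, rather than first showing every non-maximal class has size below $|A|/(r^{kh}(\log n)^2)$. You also use $j=1$ with the two classes swapped instead of $j=h-1$. The Hall's-condition digression is unnecessary and slightly muddled; the greedy count in your last paragraph already suffices.
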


\begin{proof}
Let $C_P\in {[r]\choose kh-1}$ with $|A_{P,C_P}|=\max \left\{|A_{P,C}|\colon\, C\in {[r]\choose kh-1}\right\}$.
We shall show that for any $C\in {[r]\choose kh-1}\setminus \{C_P\}$, we have $|A_{P,C}|<\frac{|A|}{r^{kh}(\log n)^2}$.
This together with Fact~\ref{fact:sta-Xkh-1} implies that $|A_{P,C_P}|> |A|-\frac{2|A|}{(\log n)^3}-{r\choose kh-1}\frac{|A|}{r^{kh}(\log n)^2}>|A|-\frac{|A|}{(\log n)^2}.$
Suppose for a contradiction that there exists a $C\in {[r]\choose kh-1}\setminus \{C_P\}$ with $|A_{P,C}|\geq \frac{|A|}{r^{kh}(\log n)^2}$.
By the choice of $C_P$, we also have $|A_{P,C_P}| \geq \frac{|A|}{r^{kh}(\log n)^2}$.

Applying Lemma~\ref{le:fdkhj} with $j=h-1$ and recalling that $|A|\geq \frac{n^d}{\log n}$, we have
\begin{align*}
~&~f_{d,k,h,h-1}(A_{P,C_P}, A_{P,C}) \\
\geq &~c_{\ref{le:fdkhj}}\frac{1}{n^{d(k-1)}}|A_{P,C_P}|^{k(h-1)}|A_{P,C}|^{k} \geq c_{\ref{le:fdkhj}}\frac{1}{n^{d(k-1)}} \left(\frac{|A|}{r^{kh}(\log n)^2}\right)^{k(h-1)} \left(\frac{|A|}{r^{kh}(\log n)^2}\right)^k\\
= &~c_{\ref{le:fdkhj}}\frac{1}{n^{d(k-1)}}\left(\frac{|A|}{r^{kh}(\log n)^2}\right)^{kh} = c_{\ref{le:fdkhj}}\frac{1}{n^{d(k-1)}}\frac{|A|^{kh}}{r^{k^2h^2}(\log n)^{2kh}} \\
\geq &~c_{\ref{le:fdkhj}}\frac{1}{n^{d(k-1)}}\frac{|A|^{k(h-1)+1}}{r^{k^2h^2}(\log n)^{2kh}}\left(\frac{n^d}{\log n}\right)^{k-1} = c_{\ref{le:fdkhj}}\frac{|A|^{k(h-1)+1}}{r^{k^2 h^2}(\log n)^{2kh+k-1}} > \frac{1}{(\log n)^{5kh}}|A|^{k(h-1)+1}.
\end{align*}
Note that for any solution to $\sum_{i=1}^{h}x_{1,i}=\sum_{i=1}^{h}x_{2,i}=\cdots =\sum_{i=1}^{h}x_{k,i}$ with
$\{x_{\ell,i}\colon\, 1\leq \ell \leq k, 1\leq i\leq h-1\}\subseteq A_{P,C_P}$ and $\{x_{1,h}, \ldots, x_{k,h}\}\subseteq A_{P,C}$,
we have $P(x_{\ell,i})=C_P$ and $P(x_{\ell,h})=C$ for every $\ell\in [k]$ and $i\in [h-1]$.
Since $C\neq C_P$, we may assume that $C_P=\{\alpha_1, \ldots, \alpha_{kh-1}\}$, $\alpha^{\ast}_1\in C\setminus C_P$, $\alpha^{\ast}_2, \ldots, \alpha^{\ast}_k\in C\setminus \{\alpha^{\ast}_1\}$ and $\alpha^{\ast}_2, \ldots, \alpha^{\ast}_k\notin \{\alpha_1, \ldots, \alpha_{kh-k}\}$.
Let $P'$ be a subtemplate of $P$ with
$P'(x_{\ell,h})=\{\alpha^{\ast}_{\ell}\}$ for $\ell\in [k]$,
$\left\{P'(x_{\ell,i})\colon\, 1\leq \ell \leq k, 1\leq i\leq h-1\right\}=\left\{\{\alpha_i\}\colon\, 1\leq i\leq kh-k\right\}$,
and $P'(x)=\emptyset$ for $x\in A\setminus \{x_{\ell,i}\colon\, 1\leq \ell\leq k, 1\leq i\leq h\}$.
Then $P'$ is a rainbow solution to the $B_{k,h}$-equation.
Thus $P$ has at least $f_{d,k,h,h-1}(A_{P,C_P}, A_{P,C})>\frac{1}{(\log n)^{5kh}}|A|^{k(h-1)+1}$ subtemplates that are rainbow solutions to the $B_{k,h}$-equation, contradicting Theorem~\ref{th:BkhCT}~(ii).
\end{proof}

For $\mathcal{C}_{bad}$, we now show that $g(\mathcal{C}_{bad}, A)$ is much smaller than the lower bound on $g_r(A)$ given by Inequality~(\ref{eq:lower}).

\begin{lemma}\label{le:sta-Cbad}
There exists a constant $c_{\ref{le:sta-Cbad}}=c_{\ref{le:sta-Cbad}}(k,h)>0$ such that $$g(\mathcal{C}_{bad}, A)\leq 2^{-c_{\ref{le:sta-Cbad}}\frac{|A|}{(\log n)^3}}(kh-1)^{|A|}.$$
\end{lemma}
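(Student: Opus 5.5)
We bound $g(\mathcal{C}_{bad},A)$ by the total number of $r$-colorings that are subtemplates of some $P\in\mathcal{C}_{bad}$, i.e.
$$g(\mathcal{C}_{bad},A)\leq \sum_{P\in\mathcal{C}_{bad}}\prod_{x\in A}|P(x)|,$$
and then estimate each product using the structure forced by Lemma~\ref{le:sta-Xkh} and the definition of $\mathcal{C}_{bad}$. The factor $|\mathcal{C}|$ from Theorem~\ref{th:BkhCT}~(iii) is negligible: it is at most $2^{|A|n^{-\frac{k(h-1)}{kh-1}d}(\log n)^{10}}$, and since $n^{-\frac{k(h-1)}{kh-1}d}(\log n)^{10}=o((\log n)^{-3})$ it will be absorbed at the end.

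Fix $P\in\mathcal{C}_{bad}$, so $|X_{\leq kh-2}(P)|>\frac{|A|}{(\log n)^3}$, while $|X_{\geq kh}(P)|\leq\frac{|A|}{(\log n)^4}$ by Lemma~\ref{le:sta-Xkh}. Split $A$ into three parts:
\begin{itemize}
\item points in $X_{\geq kh}$ contribute a factor at most $r$ each;
\item points in $X_{kh-1}$ contribute $kh-1$ each;
\item points in $X_{\leq kh-2}$ contribute at most $kh-2$ each.
\end{itemize}
Hence
$$\prod_{x\in A}|P(x)|\leq r^{|X_{\geq kh}|}(kh-2)^{|X_{\leq kh-2}|}(kh-1)^{|A|-|X_{\leq kh-2}|}\leq (kh-1)^{|A|}\left(\frac{kh-2}{kh-1}\right)^{|X_{\leq kh-2}|}r^{\frac{|A|}{(\log n)^4}}.$$
Note that if $kh-2=0$, this is impossible since $kh\geq 4$; in general $\frac{kh-2}{kh-1}<1$ is a fixed constant. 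Write $\left(\frac{kh-2}{kh-1}\right)=2^{-c'}$ with $c'=\log\frac{kh-1}{kh-2}>0$. Then the product is at most
$$(kh-1)^{|A|}\,2^{-c'\frac{|A|}{(\log n)^3}+\frac{|A|\log r}{(\log n)^4}}.$$

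Summing over at most $|\mathcal{C}|$ templates gives
$$g(\mathcal{C}_{bad},A)\leq (kh-1)^{|A|}\,2^{-c'\frac{|A|}{(\log n)^3}+\frac{|A|\log r}{(\log n)^4}+|A|n^{-\frac{k(h-1)}{kh-1}d}(\log n)^{10}}.$$
Both positive error terms are $o\big(\frac{|A|}{(\log n)^3}\big)$ as $n\to\infty$, with $r$, $k$, $h$, $d$ fixed. So for $n$ large the exponent is at most $-\frac{c'}{2}\frac{|A|}{(\log n)^3}$, and we take $c_{\ref{le:sta-Cbad}}=c'/2$, which depends only on $k$ and $h$.

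The only delicate point is remembering that the $r^{|X_{\geq kh}|}$ loss must be dominated. This is exactly why Lemma~\ref{le:sta-Xkh} uses the threshold $(\log n)^{-4}$ while $\mathcal{C}_{bad}$ uses $(\log n)^{-3}$. The rest is routine bookkeeping.
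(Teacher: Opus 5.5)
Your proposal is correct and matches the paper's proof: bound each $g(P,A)$ by the product of palette sizes split over $X_{\leq kh-2}$, $X_{kh-1}$ and $X_{\geq kh}$, control the $r^{|X_{\geq kh}|}$ loss with Lemma~\ref{le:sta-Xkh}, and absorb $|\mathcal{C}|$ from Theorem~\ref{th:BkhCT}~(iii), arriving at the same constant $\frac{1}{2}\log\frac{kh-1}{kh-2}$. Your choice to bound each template uniformly before summing is slightly cleaner than the paper's write-up, where the template-dependent $a'$ and $a''$ are pulled outside the sum.
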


\begin{proof}
Given a $P\in \mathcal{C}_{bad}$ and $i\in [r]$, let $a_i\colonequals |X_i(P)|$, and let $a'=|X_{\leq kh-2}(P)|$ and $a''=|X_{\geq kh}(P)|$.
Then $a'> \frac{|A|}{(\log n)^3}$ by the definition of $\mathcal{C}_{bad}$, and $a''\leq \frac{|A|}{(\log n)^4}$ by Lemma~\ref{le:sta-Xkh}.
Since $|A|=a'+ a_{kh-1} + a''$ and $a'=\sum_{i=1}^{kh-2}a_i$, we have
\begin{align*}
g(P, A)\leq &~\left(\prod_{i=1}^{kh-2}i^{a_i}\right)(kh-1)^{a_{kh-1}}r^{a''} \leq (kh-2)^{a'}(kh-1)^{|A|-a'-a''}r^{a''}\\
= &~\left(\frac{kh-2}{kh-1}\right)^{a'}(kh-1)^{|A|}\left(\frac{r}{kh-1}\right)^{a''} = 2^{-a'\log \frac{kh-1}{kh-2} + a''\log \frac{r}{kh-1}}\cdot (kh-1)^{|A|}.
\end{align*}
By Theorem~\ref{th:BkhCT}~(iii), we further have
\begin{align*}
\sum_{P\in \mathcal{C}_{bad}}g(P, A)\leq &~|\mathcal{C}_{bad}|2^{-a'\log \frac{kh-1}{kh-2} + a''\log \frac{r}{kh-1}}\cdot (kh-1)^{|A|}\\
\leq &~|\mathcal{C}|2^{-a'\log \frac{kh-1}{kh-2} + a''\log \frac{r}{kh-1}}\cdot (kh-1)^{|A|}\\
\leq &~\left(2^{|A|n^{-\frac{k(h-1)}{kh-1}d}(\log n)^{10}}\right)\left(2^{-a'\log \frac{kh-1}{kh-2} + a''\log \frac{r}{kh-1}}\cdot (kh-1)^{|A|}\right)\\
= &~2^{|A|n^{-\frac{k(h-1)}{kh-1}d}(\log n)^{10}-a'\log \frac{kh-1}{kh-2} + a''\log \frac{r}{kh-1}} \cdot (kh-1)^{|A|}.
\end{align*}
Since $\log \frac{kh-1}{kh-2}>0$, $\log \frac{r}{kh-1}\geq \log \frac{kh}{kh-1}>0$, $a'> \frac{|A|}{(\log n)^3}$, $a''\leq \frac{|A|}{(\log n)^4}$ and $-\frac{k(h-1)}{kh-1}<0$, we have
\begin{align*}
~ &~|A|n^{-\frac{k(h-1)}{kh-1}d}(\log n)^{10}-a'\log \frac{kh-1}{kh-2} + a''\log \frac{r}{kh-1}\\
< &~|A|n^{-\frac{k(h-1)}{kh-1}d}(\log n)^{10}-\frac{|A|}{(\log n)^3}\log \frac{kh-1}{kh-2} + \frac{|A|}{(\log n)^4}\log \frac{r}{kh-1}\\
\leq &~-\frac{1}{2}\frac{|A|}{(\log n)^3}\log \frac{kh-1}{kh-2}.
\end{align*}
Let $c_{\ref{le:sta-Cbad}}\colonequals \frac{1}{2}\log \frac{kh-1}{kh-2}$.
Then $g(\mathcal{C}_{bad}, A)\leq \sum_{P\in \mathcal{C}_{bad}}g(P, A)\leq 2^{-c_{\ref{le:sta-Cbad}}\frac{|A|}{(\log n)^3}}(kh-1)^{|A|}.$
\end{proof}

\section{Completing the proof}
\label{sec:proof}

In this section, we complete the proof of Theorem~\ref{th:Bkh}.
We divide the rest of proof into two parts:
\begin{itemize}
\item[(1)] Counting and typical structures: for $|A|\geq n^d-\frac{n^d}{\log n}$, we show that $g_r(A)= (1+o(1))\binom{r}{kh-1}(kh-1)^{|A|}$, and all but a $o(1)$ proportion of $r$-colorings of $A$ without rainbow solutions to the $B_{k,h}$-equation are $(kh-1)$-colorings.
\item[(2)] Extremal structure: among all subsets of $[n]^d$, we show that $[n]^d$ is the unique subset admitting the maximum number of $r$-colorings without rainbow solutions to the $B_{k,h}$-equation.
\end{itemize}

\subsection{Counting and typical structures}
\label{subsec:counting}

Let $d, k, h, r, n$ be integers with $d\geq 1$, $k\geq 2$, $h\geq 2$, $r\geq kh$ and $n$ sufficiently large,
and let $A\subseteq [n]^d$ be a subset with $|A|\geq n^d-\frac{n^d}{\log n}$.
By Theorem~\ref{th:BkhCT}~(i), we have
\begin{equation}\label{eq:upper1}
g_r(A)= g(\mathcal{C}, A)\leq g(\mathcal{C}_{good}, A)+g(\mathcal{C}_{bad}, A).
\end{equation}
Lemma~\ref{le:sta-Cbad} implies that $g(\mathcal{C}_{bad}, A)=o\left((kh-1)^{|A|}\right)$.
We next consider $g(\mathcal{C}_{good}, A)$.

Let $\mathscr{G}$ be the set of all $r$-colorings of $A$ without rainbow solutions to the $B_{k,h}$-equation such that each of these $r$-colorings is a subtemplate of some template in $\mathcal{C}_{good}$.
Let ${[r]\choose kh-1}=\left\{C_1, \ldots, C_{r\choose kh-1}\right\}.$
By Lemma~\ref{le:sta-CP}, there exist ${r\choose kh-1}$ subsets $\mathscr{G}_1, \ldots, \mathscr{G}_{r\choose kh-1}$ with $\mathscr{G}=\mathscr{G}_1\cup\cdots\cup\mathscr{G}_{r\choose kh-1}$ such that,
for each $1\leq i\leq {r\choose kh-1}$ and any $r$-coloring in $\mathscr{G}_i$, at least $|A|-\frac{|A|}{(\log n)^2}$ points is colored by a color in $C_i$.

In the following, we first consider the {\it deviation gain}, that is, the number $g_{dev}(A, C_i)$ of $r$-colorings in $\mathscr{G}_i$ in which at least one point receives a color in $[r]\setminus C_i$.
Note that
\begin{equation}\label{eq:upper2}
g(\mathcal{C}_{good}, A)=|\mathscr{G}|\leq \sum_{1\leq i\leq {r\choose kh-1}}\left|\mathscr{G}_i\right| = \sum_{1\leq i\leq {r\choose kh-1}} \left((kh-1)^{|A|}+g_{dev}(A, C_i)\right).
\end{equation}
We now show that the deviation gain is much smaller than the number of $(kh-1)$-colorings.

\begin{lemma}\label{le:dev}
For any $1\leq i\leq {r\choose kh-1}$, we have $g_{dev}(A, C_i)= o\left((kh-1)^{|A|}\right).$
\end{lemma}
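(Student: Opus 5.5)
We bound $g_{dev}(A,C_i)$ by summing over the set $D\subseteq A$ of deviating points, those receiving a color outside $C_i$. Lemma~\ref{le:sta-CP} gives $1\le |D|\le \frac{|A|}{(\log n)^2}$. Fix $D$ with $|D|=m$. Then each point of $A\setminus D$ gets a color of $C_i$, and each point of $D$ gets one of at most $r$ colors.

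The key step is to show that, for a fixed $D$ and a fixed coloring of $D$, the number of admissible colorings of $A\setminus D$ with colors from $C_i$ is at most $(kh-1)^{|A|-m}\cdot 2^{-c m n^{d}}$, or at least something very small. The plan is as follows.

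For each $v\in D$, Corollary~\ref{cor:fAv} (applicable since $|A|=n^d-o(n^d)$) gives $c_{\ref{cor:fAv}}n^{dk(h-1)}$ sets $S$ of size $kh-1$ in $A\setminus\{v\}$ forming a solution with $v$. Discard those meeting $D\setminus\{v\}$. A count as in the proof of Corollary~\ref{cor:fAv} shows there are at most $m\cdot n^{d(k(h-1)-1)}$ of these, which is negligible since $m\le n^d/(\log n)^2$. From the remaining family, greedily pick many sets that are pairwise disjoint. A fixed point $w$ lies in at most $O(n^{d(k(h-1)-1)})$ such sets, so each chosen set blocks $O(n^{d(k(h-1)-1)})$ others. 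The greedy procedure therefore yields $\Omega(n^d)$ pairwise disjoint $(kh-1)$-sets $S_1,\dots,S_t\subseteq A\setminus D$, each forming a solution with $v$.

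Since $c(v)\notin C_i$, the coloring has no rainbow solution only if no $S_j$ receives all $kh-1$ colors of $C_i$ bijectively. For a single set, the number of forbidden colorings is $(kh-1)!$ out of $(kh-1)^{kh-1}$. So the colorings of $\bigcup_j S_j$ number at most $\big((kh-1)^{kh-1}-(kh-1)!\big)^{t}=(kh-1)^{(kh-1)t}\,(1-\beta)^t$ with $\beta=(kh-1)!/(kh-1)^{kh-1}>0$. Points outside these sets are colored in at most $kh-1$ ways each. This gives a factor $2^{-\Omega(n^d)}$ from a single deviating point.

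To combine over all points of $D$, it suffices to use one deviating point $v\in D$ only. The count for fixed $D$ becomes at most $r^{m}(kh-1)^{|A|-m}2^{-c' n^d}$. Summing over $D$ gives at most
$$\sum_{m\le |A|/(\log n)^2}\binom{|A|}{m} r^m (kh-1)^{|A|}2^{-c'n^d} \le 2^{O(n^d\log\log n/(\log n)^2)}\,2^{-c'n^d}(kh-1)^{|A|}=o\big((kh-1)^{|A|}\big).$$
Here $\binom{|A|}{m}\le 2^{|A|H(1/(\log n)^2)}$ and $H(x)=O(x\log(1/x))$.

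The main obstacle is ensuring that the disjoint sets avoid $D$ entirely even when $D$ is as large as $n^d/(\log n)^2$. This needs the codegree-type count of solutions through $v$ and another fixed point, which is $O(n^{d(k(h-1)-1)})$ by the argument of Claim~\ref{cl:deltaj}. That count keeps the removed portion at $o(n^{dk(h-1)})$. If the disjointness bookkeeping fails, the fallback is to take $t=\Omega(n^d/\log n)$, which still beats the entropy term $2^{O(n^d\log\log n/(\log n)^2)}$.
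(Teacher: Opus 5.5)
Your proposal is correct and takes essentially the same route as the paper. You fix the deviation set and use a single deviating point $v$ to find $\Omega(n^d)$ pairwise disjoint $(kh-1)$-sets in $A\setminus D$ completing solutions with $v$ (via Corollary~\ref{cor:fAv} plus a greedy argument). You then lose a factor $\bigl(1-(kh-1)!/(kh-1)^{kh-1}\bigr)$ per set and absorb the $\binom{|A|}{m}r^m$ cost, which is at most $2^{O(n^d\log\log n/(\log n)^2)}$.

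The differences are cosmetic. The paper applies Corollary~\ref{cor:fAv} directly to $(A\setminus M)\cup\{v\}$ rather than discarding the sets meeting $D\setminus\{v\}$ by hand. Also, your tentative bound $2^{-cmn^d}$ is false in general, but as you note it is never needed.
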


\begin{proof}
Let $\mathscr{G}_{dev}(A, C_i)$ be the set of $r$-colorings in $\mathscr{G}_i$ in which at least one point receives a color in $[r]\setminus C_i$.
Then for each $r$-coloring $c\in \mathscr{G}_{dev}(A, C_i)$, there exists a {\it deviation set} $M_c\colonequals \{x\in A\colon\, c(x)\notin C_i\}$.
From the above argument, we know that $|M_c|\leq \frac{|A|}{(\log n)^2}$.

Fixing a set $M\subseteq A$ with $1\leq |M|\leq \frac{|A|}{(\log n)^2}$ arbitrarily, we now consider the number of $r$-colorings in $\mathscr{G}_i$ with $M$ as the deviation set.
Note that there are at most $r^{|M|}$ ways to color the points in $M$.
In the following, we consider the number of ways to color the points in $A\setminus M$.

Let $v$ be an arbitrarily fixed point in the deviation set $M$.
Then $c(v)\notin C_i$.
Let $\mathscr{F}$ be the collection of distinct subsets $\{x_{1,i}\colon\, 2\leq i\leq h\}\cup \{x_{\ell,i}\colon\, 2\leq \ell\leq k, 1\leq i\leq h\}$ of $A\setminus M$ such that $v+\sum_{i=2}^{h}x_{1,i}=\sum_{i=1}^{h}x_{2,i}=\cdots =\sum_{i=1}^{h}x_{k,i}$.
Since $|A|\geq n^d-\frac{n^d}{\log n}$, we have $|A\setminus M|\geq |A|-\frac{|A|}{(\log n)^2}=n^d-o(n^d).$
By Corollary~\ref{cor:fAv}, we have $|\mathscr{F}|= f_{(A\setminus M)\cup \{v\}}(v)\geq c_{\ref{cor:fAv}}n^{dk(h-1)}$.

\begin{claim}\label{cl:dev}
There exists a constant $c_{\ref{cl:dev}}=c_{\ref{cl:dev}}(d,k,h)>0$ such that $\mathscr{F}$ contains at least $c_{\ref{cl:dev}}n^{d}$ pairwise disjoint elements.
\end{claim}

\begin{proof}
Let $F_1=\left\{x_{1,i}^{(1)}\colon\, 2\leq i\leq h\right\}\cup \left\{x_{\ell,i}^{(1)}\colon\, 2\leq \ell\leq k, 1\leq i\leq h\right\}\in \mathscr{F}$.
We claim that there are at most $(kh-1)|A\setminus M|^{k(h-1)-1}$ elements in $\mathscr{F}$ containing a point of $F_1$.
Indeed, given any point of the $kh-1$ points of $F_1$, say $x_{1,2}^{(1)}$,
there are at most $|A\setminus M|^{k(h-1)-1}$ choices of $\{x_{1,i}\colon\, 3\leq i\leq h\}\cup \{x_{\ell,i}\colon\, 2\leq \ell\leq k, 1\leq i\leq h-1\}$,
and after choosing these points, there is at most one choice of $x_{\ell,h}$ satisfying $v+x_{1,2}^{(1)}+\sum_{i=3}^{h}x_{1,i}=\sum_{i=1}^{h}x_{\ell,i}$ for each $2\leq \ell\leq k$.
Hence, there exists a subset $\mathscr{F}_1\subseteq \mathscr{F}$ with $|\mathscr{F}_1|\geq |\mathscr{F}|-(kh-1)|A\setminus M|^{k(h-1)-1}\geq |\mathscr{F}|-(kh-1)n^{d(k(h-1)-1)}$ such that for any $F\in \mathscr{F}_1$ we have $F_1\cap F=\emptyset$.
Let $F_2=\left\{x_{1,i}^{(2)}\colon\, 2\leq i\leq h\right\}\cup \left\{x_{\ell,i}^{(2)}\colon\, 2\leq \ell\leq k, 1\leq i\leq h\right\}\in \mathscr{F}_1$.
By an analogous argument as above, there exists a subset $\mathscr{F}_2\subseteq \mathscr{F}_1$ with $|\mathscr{F}_2|\geq |\mathscr{F}_1|-(kh-1)n^{d(k(h-1)-1)}$ such that $F_2\cap F=\emptyset$ for any $F\in \mathscr{F}_2$.
Let $F_3=\left\{x_{1,i}^{(3)}\colon\, 2\leq i\leq h\right\}\cup \left\{x_{\ell,i}^{(3)}\colon\, 2\leq \ell\leq k, 1\leq i\leq h\right\}\in \mathscr{F}_2$.
Continuing this process, we obtain pairwise disjoint elements $F_1, F_2, \ldots, F_{t}$ of $\mathscr{F}$ for some $t$.
Since $|\mathscr{F}|\geq c_{\ref{cor:fAv}}n^{dk(h-1)}$, we have $t\geq \left\lfloor\frac{c_{\ref{cor:fAv}}n^{dk(h-1)}}{(kh-1)n^{d(k(h-1)-1)}}\right\rfloor = c_{\ref{cl:dev}}n^{d}$ for some constant $c_{\ref{cl:dev}}>0$.
\end{proof}

By Claim~\ref{cl:dev}, $\mathscr{F}$ contains a subset $\mathscr{F}'$ of pairwise disjoint elements with $|\mathscr{F}'|\geq c_{\ref{cl:dev}}n^{d}$.
Let $A'\colonequals \bigcup_{F\in \mathscr{F}'} F$ and $A''\colonequals A\setminus (A'\cup M)$.
Then $|A'|=(kh-1)|\mathscr{F}'|\geq c_{\ref{cl:dev}}(kh-1)n^{d}$.
Since $M$ is the deviation set, we have $c(x)\in C_i$ for any point $x\in A\setminus M$.
Recall that $c(v)\notin C_i$.
For any $F\in \mathscr{F}'$, in order to avoid a rainbow solutions to the $B_{k,h}$-equation, the colors of the $kh-1$ points in $F$ are not pairwise distinct.
Thus there are at most $(kh-1)^{kh-1}-(kh-1)!$ ways to color such $kh-1$ points.
Hence, there are at most $((kh-1)^{kh-1}-(kh-1)!)^{|\mathscr{F}'|}$ ways to color the points in $A'$.
For the points in $A''$, there are at most $(kh-1)^{|A''|}$ ways to color them.
Therefore, the number $T(M)$ of $r$-colorings in $\mathscr{G}_i$ with $M$ as the deviation set satisfies
\begin{align*}
T(M) \leq &~r^{|M|}\cdot ((kh-1)^{kh-1}-(kh-1)!)^{|\mathscr{F}'|}\cdot (kh-1)^{|A''|} \\
= &~r^{|M|}\cdot ((kh-1)^{kh-1}-(kh-1)!)^{\frac{|A'|}{kh-1}}\cdot (kh-1)^{|A''|}\\
= &~\left(\frac{r}{kh-1}\right)^{|M|}(kh-1)^{|M|} \cdot \left(\frac{(kh-1)^{kh-1}-(kh-1)!}{(kh-1)^{kh-1}}\right)^{\frac{|A'|}{kh-1}}(kh-1)^{|A'|}\cdot (kh-1)^{|A''|}\\
= &~\left(\frac{r}{kh-1}\right)^{|M|}\left(\frac{(kh-1)^{kh-1}-(kh-1)!}{(kh-1)^{kh-1}}\right)^{\frac{|A'|}{kh-1}} (kh-1)^{|A|}.
\end{align*}
Let $\alpha\colonequals \frac{r}{kh-1}$ and $\frac{1}{\beta}\colonequals \frac{(kh-1)^{kh-1}-(kh-1)!}{(kh-1)^{kh-1}}$.
Since $\alpha >1$, $\frac{1}{\beta}<1$, $|M|\leq \frac{|A|}{(\log n)^2}\leq \frac{n^d}{(\log n)^2}$ and $|A'|\geq c_{\ref{cl:dev}}(kh-1)n^{d}$, we have
\begin{align*}
~&~\left(\frac{r}{kh-1}\right)^{|M|}\left(\frac{(kh-1)^{kh-1}-(kh-1)!}{(kh-1)^{kh-1}}\right)^{\frac{|A'|}{kh-1}} \\
\leq &~\alpha^{\frac{n^d}{(\log n)^2}}\left(\frac{1}{\beta}\right)^{\frac{c_{\ref{cl:dev}}(kh-1)n^{d}}{kh-1}} = 2^{(\log \alpha)\frac{n^d}{(\log n)^2} -c_{\ref{cl:dev}}(\log \beta)n^{d}}.
\end{align*}
Thus $T(M)\leq 2^{(\log \alpha)\frac{n^d}{(\log n)^2} -c_{\ref{cl:dev}}(\log \beta)n^{d}}\cdot (kh-1)^{|A|}.$
Then
\begin{align*}
g_{dev}(A, C_i)\leq &~\sum_{M\subseteq A, 1\leq |M|\leq \frac{|A|}{(\log n)^2}}T(M) \leq \sum_{i=1}^{\left\lfloor\frac{|A|}{(\log n)^2}\right\rfloor} {|A| \choose i}2^{(\log \alpha)\frac{n^d}{(\log n)^2} -c_{\ref{cl:dev}}(\log \beta)n^{d}}\cdot (kh-1)^{|A|}\\
\leq &~\left\lfloor\frac{|A|}{(\log n)^2}\right\rfloor{|A| \choose \left\lfloor\frac{|A|}{(\log n)^2}\right\rfloor} 2^{(\log \alpha)\frac{n^d}{(\log n)^2} -c_{\ref{cl:dev}}(\log \beta)n^{d}}\cdot (kh-1)^{|A|}\\
\leq &~\frac{|A|}{(\log n)^2} \left(\frac{e|A|}{\frac{|A|}{(\log n)^2}}\right)^{\frac{|A|}{(\log n)^2}} 2^{(\log \alpha)\frac{n^d}{(\log n)^2} -c_{\ref{cl:dev}}(\log \beta)n^{d}}\cdot (kh-1)^{|A|}\\
\leq &~\frac{n^d}{(\log n)^2}\left(e(\log n)^2\right)^{\frac{n^d}{(\log n)^2}} 2^{(\log \alpha)\frac{n^d}{(\log n)^2} -c_{\ref{cl:dev}}(\log \beta)n^{d}}\cdot (kh-1)^{|A|}\\
\leq &~2^{d\log n} 2^{3(\log\log n)\frac{n^d}{(\log n)^2}} 2^{(\log \alpha)\frac{n^d}{(\log n)^2} -c_{\ref{cl:dev}}(\log \beta)n^{d}}\cdot (kh-1)^{|A|} \\
\leq &~2^{\frac{n^d}{\log n} -c_{\ref{cl:dev}}(\log \beta)n^{d}}\cdot (kh-1)^{|A|} = o\left((kh-1)^{|A|}\right),
\end{align*}
where the fourth inequality holds since ${N\choose t}\leq \left(\frac{eN}{t}\right)^t$ for any integers $N\geq t\geq 1$.
This completes the proof of Lemma~\ref{le:dev}.
\end{proof}

By Lemmas~\ref{le:sta-Cbad}, \ref{le:dev}, Inequalities~(\ref{eq:upper1}) and (\ref{eq:upper2}), we have
\begin{align*}
g_r(A)= &~g(\mathcal{C}, A)\leq g(\mathcal{C}_{good}, A)+g(\mathcal{C}_{bad}, A)\\
\leq &~\sum_{1\leq i\leq {r\choose kh-1}} \left((kh-1)^{|A|}+g_{dev}(A, C_i)\right)+g(\mathcal{C}_{bad}, A)\\
\leq &~\binom{r}{kh-1}\left((kh-1)^{|A|}+o\left((kh-1)^{|A|}\right)\right)+2^{-c'\frac{|A|}{(\log n)^3}}(kh-1)^{|A|}\\
= &~(1+o(1))\binom{r}{kh-1}(kh-1)^{|A|}.
\end{align*}
Moreover, from the lower bound~(\ref{eq:lower}), we know that the number of $(kh-1)$-colorings of $A$ is $(1-o(1))\binom{r}{kh-1}(kh-1)^{|A|}$.
Therefore, all but a $o(1)$ proportion of $r$-colorings of $A$ without rainbow solutions to the $B_{k,h}$-equation are $(kh-1)$-colorings.

\subsection{Extremal structure}
\label{subsec:extrema}

In order to show that among all subsets of $[n]^d$, $[n]^d$ is the unique subset admitting the maximum number of $r$-colorings without rainbow solutions to the $B_{k,h}$-equation,
we consider three cases based on the size of $A\subseteq [n]^d$.
Note that $g_r([n]^d)= (1-o(1))\binom{r}{kh-1}(kh-1)^{n^d}\geq (1-o(1))kh(kh-1)^{n^d}$ since $r\geq kh$.
\vspace{0.2cm}

{\bf Sparse sets: $|A|\leq \frac{n^d}{\log n}$}
\vspace{0.2cm}

In this case, we have $g_r(A)\leq r^{|A|}\leq r^{\frac{n^d}{\log n}}=(kh-1)^{(\log_{kh-1}r)\frac{n^d}{\log n}}< (kh-1)^{n^d}< g_r([n]^d).$
\vspace{0.2cm}

{\bf Medium sets: $\frac{n^d}{\log n}\leq |A|\leq n^d-\frac{n^d}{\log n}$}
\vspace{0.2cm}

In this case, since $|A|\geq \frac{n^d}{\log n}$, all the results obtained in Section~\ref{sec:sta} hold for $A$.
In particular, for any $r$-template $P\in \mathcal{C}_{good}$, we have $g(P,A)\leq (kh-1)^{|A|-\frac{|A|}{(\log n)^2}}\cdot r^{\frac{|A|}{(\log n)^2}}$ by Lemma~\ref{le:sta-CP}.
Combining with Theorem~\ref{th:BkhCT}~(iii), we have
\begin{align*}
g(\mathcal{C}_{good}, A)\leq &~|\mathcal{C}|(kh-1)^{|A|-\frac{|A|}{(\log n)^2}}\cdot r^{\frac{|A|}{(\log n)^2}} \leq 2^{|A|n^{-\frac{k(h-1)}{kh-1}d}(\log n)^{10}}\cdot (kh-1)^{|A|-\frac{|A|}{(\log n)^2}}\cdot r^{\frac{|A|}{(\log n)^2}}\\
= &~(kh-1)^{(\log_{kh-1}2)|A|n^{-\frac{k(h-1)}{kh-1}d}(\log n)^{10}}\cdot (kh-1)^{|A|-\frac{|A|}{(\log n)^2}}\cdot (kh-1)^{(\log_{kh-1}r)\frac{|A|}{(\log n)^2}}.
\end{align*}
Since $|A|\leq n^d-\frac{n^d}{\log n}$,
we further have
\begin{align*}
~ &~\log_{kh-1}\left(g(\mathcal{C}_{good}, A)\right)\\
\leq &~(\log_{kh-1}2)\left(n^d-\frac{n^d}{\log n}\right)n^{-\frac{k(h-1)}{kh-1}d}(\log n)^{10} + \left(1-\frac{1}{(\log n)^2}\right)\left(n^d-\frac{n^d}{\log n}\right) + (\log_{kh-1}r)\frac{n^d-\frac{n^d}{\log n}}{(\log n)^2}\\
< &~n^{\frac{k-1}{kh-1}d}(\log n)^{10} + n^d-\frac{n^d}{\log n} + (\log_{kh-1}r)\frac{n^d}{(\log n)^2} \leq n^d-\frac{n^d}{2\log n}.
\end{align*}
Hence, $g(\mathcal{C}_{good}, A)\leq (kh-1)^{n^d-\frac{n^d}{2\log n}}<\frac{1}{2}(kh-1)^{n^d}.$
By Lemma~\ref{le:sta-Cbad}, we also have $g(\mathcal{C}_{bad}, A)\leq 2^{-c'\frac{|A|}{(\log n)^3}}(kh-1)^{|A|}<\frac{1}{2}(kh-1)^{n^d}.$
Thus
$$g_r(A)=g(\mathcal{C}, A)\leq g(\mathcal{C}_{good}, A)+g(\mathcal{C}_{bad}, A)<\frac{1}{2}(kh-1)^{n^d}+\frac{1}{2}(kh-1)^{n^d}=(kh-1)^{n^d}< g_r([n]^d).$$

{\bf Dense sets: $|A|\geq n^d-\frac{n^d}{\log n}$}
\vspace{0.2cm}

If $n^d-\frac{n^d}{\log n} \leq |A|\leq n^d-1$, then the arguments in Section~\ref{subsec:counting} imply that
\begin{align*}
g_r(A)= &~(1+o(1))\binom{r}{kh-1}(kh-1)^{|A|}\leq (1+o(1))\binom{r}{kh-1}(kh-1)^{n^d-1}\\
= &~\frac{1}{kh-1}(1+o(1))\binom{r}{kh-1}(kh-1)^{n^d} < g_r([n]^d).
\end{align*}

By the above arguments, for any subset $A\subseteq [n]^d$ with $|A|\leq n^d-1$, we have $g_r(A) < g_r([n]^d).$
Therefore, among all subsets of $[n]^d$, $[n]^d$ is the unique subset admitting the maximum number of $r$-colorings without rainbow solutions to the $B_{k,h}$-equation.

\section{Concluding remarks}
\label{sec:conclu}

In this paper, we extend the Cameron-Erd\H{o}s problem to study rainbow solutions to generalized Sidon-equations in multidimensional grids.
We show that for integers $d, k, h, r, n$ with $d\geq 1$, $k\geq 2$, $h\geq 2$, $r\geq kh$ and $n$ sufficiently large,
\begin{itemize}
\item[(1)] if $A\subseteq [n]^d$ is a subset with $|A|\geq n^d-\frac{n^d}{\log n}$, then the asymptotic number of $r$-colorings of $A$ without rainbow solutions to the $B_{k,h}$-equation is $(1+o(1))\binom{r}{kh-1}(kh-1)^{|A|}$, and all but a $o(1)$ proportion of these $r$-colorings are $(kh-1)$-colorings;
\item[(2)] among all subsets of $[n]^d$, $[n]^d$ is the unique subset admitting the maximum number of $r$-colorings without rainbow solutions to the $B_{k,h}$-equation.
\end{itemize}
\vspace{-0.2cm}
We next provide two remarks regarding our result.
\vspace{0.2cm}

The first remark is that for any integer $k\geq 2$ and $k$ positive integers $h_1, h_2, \ldots, h_k$, we can also study the problem with respect to rainbow solutions to the equation $\sum_{i=1}^{h_1}x_{1,i}=\sum_{i=1}^{h_2}x_{2,i}=\cdots =\sum_{i=1}^{h_k}x_{k,i}.$
It is possible that for any subset $A\subseteq [n]^d$ with$|A|=(1-o(1))n^d$, the asymptotic number of $r$-colorings of $A$ without rainbow solutions to the above mentioned equation is $(1+o(1))\binom{r}{\left(\sum_{\ell=1}^{k}h_{\ell}\right)-1}\left(\left(\sum_{\ell=1}^{k}h_{\ell}\right)-1\right)^{|A|}$, and almost all of these $r$-colorings are $\left(\left(\sum_{\ell=1}^{k}h_{\ell}\right)-1\right)$-colorings.
However, the set $[n]^d$ may not attain the maximum number of $r$-colorings without rainbow solutions to the equation, among all subsets of $[n]^d$.
To see this, we consider the following two examples (for convenience, we only consider the case $k=2$).
Firstly, assume that $h_1<h_2$.
Let $A=\left\{\left\lceil\frac{h_1}{h_2}n\right\rceil+1, \ldots, n\right\}^d$.
Consider two arbitrary subsets $\{x_{1,1}, \ldots, x_{1,h_1}\}$ and $\{x_{2,1}, \ldots, x_{2,h_2}\}$ of $A$.
For any $j\in [d]$, the $j$th coordinate of $\sum_{i=1}^{h_1}x_{1,i}$ is at most $h_1 n$, and the $j$th coordinate of $\sum_{i=1}^{h_2}x_{2,i}$ is greater than $h_2\frac{h_1}{h_2}n=h_1 n$.
Hence, $A$ contains no solutions to the equation $\sum_{i=1}^{h_1}x_{1,i}=\sum_{i=1}^{h_2}x_{2,i}$, and thus the number of $r$-colorings of $A$ without rainbow solutions to the equation is $r^{|A|}\geq r^{(1-o(1))\left(\frac{h_2-h_1}{h_2}n\right)^d}$, which is larger than $(1+o(1))\binom{r}{h_1+h_2-1}(h_1+h_2-1)^{n^d}$ when $r$ is sufficiently large compared to $h_1$ and $h_2$.
Secondly, when $h_1$ is odd and $h_2$ is even, let $B=\left\{x\in [n]\colon\, \mbox{$x$ is odd}\right\}^d$.
Consider two arbitrary subsets $\{x_{1,1}, \ldots, x_{1,h_1}\}$ and $\{x_{2,1}, \ldots, x_{2,h_2}\}$ of $B$.
For any $j\in [d]$, the $j$th coordinate of $\sum_{i=1}^{h_1}x_{1,i}$ is odd, and the $j$th coordinate of $\sum_{i=1}^{h_2}x_{2,i}$ is even.
Hence, $B$ contains no solutions to the equation $\sum_{i=1}^{h_1}x_{1,i}=\sum_{i=1}^{h_2}x_{2,i}$, and thus the number of $r$-colorings of $B$ without rainbow solutions to the equation is $r^{|B|}\geq r^{\left(\frac{1}{2}n\right)^d}$, which is larger than $(1+o(1))\binom{r}{h_1+h_2-1}(h_1+h_2-1)^{n^d}$ when $r$ is large enough.
Therefore, when we consider general integers $h_1, h_2, \ldots, h_k$, the situation becomes more complicated.
One of the reason is that for $h_1=h_2=\cdots=h_k=h$, the largest subset of $[n]^d$ without solutions to the equation $\sum_{i=1}^{h}x_{1,i}=\sum_{i=1}^{h}x_{2,i}=\cdots =\sum_{i=1}^{h}x_{k,i}$ is small (for example, the largest Sidon set in $[n]$ has size $(1+o(1))\sqrt{n}$; see~\cite{ErTu941});
but when $h_1, h_2, \ldots, h_k$ are not all the same, there exist subsets of size $\Theta(n^d)$ without solutions to the equation $\sum_{i=1}^{h_1}x_{1,i}=\sum_{i=1}^{h_2}x_{2,i}=\cdots =\sum_{i=1}^{h_k}x_{k,i}$.

The second remark is from a geometric perspective.
In the case that $d\geq 2$, $k=2^{t-1}$ and $h=2$ for some $t \geq 2$, the equation $x_{1,1}+x_{1,2}=\cdots=x_{2^{t-1},1}+x_{2^{t-1},2}$ describes a (possibly degenerate) $t$-dimensional parallelotope (also called a $t$-dimensional parallelepiped) in the grid $[n]^d$.
The nondegenerate $t$-dimensional parallelotope $\mathbf{P}_{t}$ is defined as the polytope with points generated by a base point $x_0$ and $t$ linearly independent vectors $v_1, \ldots, v_{t}$ as follows: $$\left\{x_0+\sum_{i=1}^{t}\epsilon_iv_i\colon\, \mbox{$\epsilon_i\in \{0,1\}$ for all $i\in [t]$}\right\}.$$
Here, the linear independence of the vectors ensures that the parallelotope is of dimension $t$, and the condition $\epsilon_i\in \{0,1\}$ ensures that all pairs of opposite edges are parallel.
Note that $\mathbf{P}_{t}$ contains $2^{t}$ points,
and these points can be partitioned into $2^{t-1}$ pairs $(x_{1,1}, x_{1,2}), \ldots, (x_{2^{t-1},1}, x_{2^{t-1},2})$ such that $x_{\ell, 1}+x_{\ell, 2}=2O$ for all $\ell \in [2^{t-1}]$, where $O$ is the geometrical center of $\mathbf{P}_{t}$.
However, the single condition of $x_{1,1}+x_{1,2}=\cdots=x_{2^{t-1},1}+x_{2^{t-1},2}$ cannot guarantee the dimension and parallelism.
Therefore, a natural problem is to study the rainbow Cameron-Erd\H{o}s problem with respect to nondegenerate $t$-dimensional parallelotopes of multidimensional grids.
We can solve this problem for the case $t=2$ (i.e., for parallelograms) by modifying our proof slightly (in fact, we only need to modify the proofs of the counting results in Section~\ref{subsec:f}).
Note that in this case, the equation $x_{1,1}+x_{1,2}=x_{2,1}+x_{2,2}$ can guarantee the parallelism since we can choose $x_0=x_{1,1}$, $v_1=x_{2,1}-x_{1,1}$ and $v_2=x_{2,2}-x_{1,1}$ (so $x_{1,1}=x_0+0\cdot v_1+0\cdot v_2$, $x_{1,2}=x_0+1\cdot v_1+1\cdot v_2$, $x_{2,1}=x_0+1\cdot v_1+0\cdot v_2$ and $x_{2,2}=x_0+0\cdot v_1+1\cdot v_2$).
Hence, we only need to control the dimension, i.e., the four points cannot be collinear.
If the four points are collinear, then after choosing $x_{1,1}$ and $x_{1,2}$ in a subset $A\subseteq [n]^d$, there are at most $n$ choices of $x_{2,1}$ on the line passing through $x_{1,1}$ and $x_{1,2}$, and after choosing these three points, there is at most one choice of $x_{2,2}$ satisfying $x_{1,1}+x_{1,2}=x_{2,1}+x_{2,2}$.
Hence, there are at most $|A|^2 n$ degenerate parallelograms in $A$, and if we fix a point $v$, then there are at most $|A| n$ degenerate parallelograms containing $v$ in $A$.
Note that $|A|^2 n$ is much smaller than the lower bounds given by the Lemmas~\ref{le:fdkh} and \ref{le:fdkhj}, and $|A| n$ is much smaller than the lower bounds given by the Lemma~\ref{le:fAv} and Corollary~\ref{cor:fAv}.
Thus our proof still applies to nondegenerate parallelograms.
For the problem with a general $t>2$, some novel ideas are called for, since we also need to guarantee the parallelism.

\section*{Acknowledgement}

This paper is supported by the National Natural Science Foundation of China (Grant No. 12501492),
Shaanxi Fundamental Science Research Project for Mathematics and Physics (Grant No. 25JSQ043),
Shaanxi Province Postdoctoral Science Foundation (Grant No. 2024BSHSDZZ155),
and the Fundamental Research Funds for the Central Universities (Grant No. GK202506024).

\vspace{-0.2cm}
\section*{Declaration of competing interest}

There is no competing interest.
\vspace{-0.2cm}

\section*{Data availability}

No data was used for the research described in the article.

\begin{spacing}{0.8} 

\end{spacing}

\end{document}